\setlist[enumerate]{leftmargin=.5in}
\setlist[itemize]{leftmargin=.5in}
\title{Inversion of band-limited discrete Fourier transforms of binary images: Uniqueness and algorithms}
\author{Howard W. Levinson\thanks{Department of Mathematics and Computer Science, Santa Clara University, Santa Clara, CA, USA (\email{hlevinson@scu.edu}).}
\and 
Vadim A. Markel\thanks{Department of Radiology, University of Pennsylvania, Philadelphia, PA, USA (\email{vmarkel@upenn.edu}).}
\and 
Nicholas Triantafillou\thanks{Center for Communications Research, Princeton, NJ, USA (\email{ngtriant@alum.mit.edu}).}}
\newcommand{\cu}{{\mathrm i}\, }
\begin{document}

\maketitle

\begin{abstract}
  Conventional inversion of the discrete Fourier transform (DFT) requires all DFT coefficients to be known. When the DFT coefficients of a rasterized image (represented as a matrix) are known only within a pass band, the original matrix cannot be uniquely recovered. In many cases of practical importance, the matrix is binary and its elements can be reduced to either 0 or 1. This is the case, for example, for the commonly used QR codes. The {\it a priori} information that the matrix is binary can compensate for the missing high-frequency DFT coefficients and restore uniqueness of image recovery. This paper addresses, both theoretically and numerically, the problem of recovery of blurred images without any known structure whose high-frequency DFT coefficients have been irreversibly lost by utilizing the binarity constraint. We investigate theoretically the smallest band limit for which unique recovery of a generic binary matrix is still possible. Uniqueness results are proved for images of sizes $N_1 \times N_2$, $N_1 \times N_1$, and $N_1^\alpha\times N_1^\alpha$, where $N_1 \neq N_2$ are prime numbers and $\alpha>1$ an integer. Inversion algorithms are proposed for recovering the matrix from its band-limited (blurred) version. The algorithms combine integer linear programming methods with lattice basis reduction techniques and significantly outperform naive implementations. The algorithm efficiently and reliably reconstructs severely blurred $29 \times 29$ binary matrices with only $11\times 11 = 121$ DFT coefficients.\\
Published in  \href{https://doi.org/10.1137/22M1540442}{{\em SIAM Journal on Imaging Sciences} {\bf 16}, 1338-1369 (2023)}\\
doi: \href{https://doi.org/10.1137/22M1540442}{10.1137/22M1540442}
\end{abstract}

\begin{keywords}
Two-dimensional discrete Fourier transform, recovery of binary matrices, inversion, deblurring
\end{keywords}

\begin{MSCcodes}
94A08, 68U10, 65T50
\end{MSCcodes}

\section{Introduction}
\label{sec:intro}

The paper address the problem of reconstruction of binary images from limited sets of discrete Fourier transform (DFT) coefficients. We are interested in exact pixel-by-pixel reconstruction of general images without any structure or known properties, i.e., under the conditions when the methods based on machine learning are not expected to be efficient. Images whose DFT coefficients are lost outside of a given pass band are blurred and therefore the problem we are addressing is that of de-blurring. A typical application is de-blurring of QR codes or similar rasterized images in which only two colors are present. Forms such as Data Matrix codes and QR codes are used in applications ranging from industrial tracking to advertising~\cite{gao_2007_1}. If the stored information is lost due to a corrupted signal at high frequencies, the results of this paper allow one to recover the original code. Therefore, the main advance reported below is the ability to reconstruct not very large but seemingly random binary images. The paper builds upon our previous results for the one-dimensional case~\cite{levinson_2021_1}, which were, in turn, related to the work of Tao~\cite{tao_2005_1}, Tropp~\cite{tropp_2008_1}, and the recent work of Pei and Chang~\cite{pei_2022_1}.  
 
Images are often blurred as a result of low-pass filtering, either due to physical limitations of the image acquisition process~\cite{born_book_1999, maznev_2017_1}, or due to application of various filters for image denoising and compression~\cite{gilbert_2014_1, hu_2014_1}. In either case, DFT coefficients of the blurred image outside of the pass band are below the noise level and, for practical purposes, lost. If no additional information is available, it is, in principle, impossible to recover the image precisely. However, if it is known {\em a priori} that the original image is binary (contains only two known values), and enough DFT coefficients are known with sufficient precision, we can utilize the binarity constraint to reconstruct all pixels precisely. This is demonstrated below both theoretically in the form of uniqueness theorems and numerically for severely blurred QR codes with the size of up to $29\times 29$.

Of course, once an image is recovered, we can also compute all of its DFT coefficients, including those that were not known beforehand. We say that, by retrieving the DFT coefficients located outside of the original pass band, we increase the image resolution. If the loss of resolution occurred due to physical limitations of the image acquisition process (such as exponential decay of evanescent waves), and we have recovered the DFT coefficients outside of the physically-imposed pass band, we say that we have achieved the effect of {\em super-resolution} -- that is, we have resolved computationally the details that are not visible directly under the experimental conditions. 

In image de-blurring applications, {\em a priori} information unrelated to the missing DFT coefficients is often available. In such cases, powerful techniques can be developed to achieve recovery of the exact image. Feasibility of achieving super-resolution with meaningful prior information has been demonstrated in many works~\cite{moitra_2015_1, nasrollahi_2014_1, romano_2017_1}. A well-studied example is the case of sparse images, which contain relatively few nonzero pixels. It was shown that the knowledge that the original image is sparse allows for stable recovery with severely under-sampled measurements~\cite{donoho_1989_1, elad_2002_1, candes_2006_1}. Corresponding fast reconstruction algorithms have been extensively developed~\cite{tropp_2007_1, blumensath_2009_1, blumensath_2012_1, blumensath_2013_1}. The sparsity constraint can be independent of the Fourier bases, but there exist many relevant results specific to the Fourier coefficients, including those applicable to random~\cite{rauhut_2007_1, rudelson_2006_1} and deterministic measurements~\cite{bailey_2012_1}.  In particular, sparse fast Fourier transform techniques~\cite{plonka_2018_1, plonka_2021_1} are used to quickly recover sparse vectors, that may or may not have additional known structure.  In these problems, however, sampling of high frequency DFT coefficients is required, which are outside the typical pass band considered in this paper.  Additional techniques for achieving super-resolution (non-sparsity regularization frameworks) have also been developed, including nonlinear interpolation~\cite{rajan_2001_1, jiji_2006_1}, Laplacian~\cite{lagendijk_1990_1, liu_2014_1} and total variation~\cite{beck_2009_1, rudin_1992_1} regularizations. 

However, the above techniques rely on assumptions about the images, which limit generality of application and which we wish to avoid in this paper. Instead, we utilize a different, yet still a fundamental constraint. Namely, we consider the case when each pixel of the image can take only two different, {\em a priori} known values. As was shown in our previous work~\cite{levinson_2021_1}, the problem can be reduced by a simple transformation to that of recovering an image whose pixel values can be either 0 or 1. We say that such images are binary. We will use no additional assumptions on the spatial distribution of zeros and ones, and will be interested in recovering the original image precisely from a limited set of DFT coefficients. Note that, while there exists some overlap between the conditions of binarity and sparsity, a binary image can contain substantially more nonzero entries (roughly half of the total) than a typical sparse image. In such cases, sparsity-based recovery methods are not efficient. 

Binary images and matrices have been extensively studied in the literature, motivated by applications to imaging~\cite{dizenzo_1996_1, marchand_book_1999, ren_2002_1} and combinatorics~\cite{fulkerson_1960_1, brualdi_1980_1, brualdi_1986_1, snijders_1991_1}. Recovering binary images from incomplete data is closely related to the problem of discrete tomography~\cite{gardner_1997_1, gardner_1999_1, hajdu_2001_1, herman_book_2012}. Here one tries to reconstruct a binary image from families of parallel line integrals (projections) with a small number of specified angles. This mathematical technique has applications to medical imaging~\cite{herman_2003_1}. In this paper, we start with DFT coefficients and show that the knowledge of some small sub-sets of such coefficients is similar to the knowledge of some selected projections, except that the line integrals of this paper are periodic in nature, unlike those that arise in discrete tomography. We note that Fourier transforms~\cite{wang_1998_1, yagle_1998_1, yagle_2001_1} as well as specific periodic constraints~\cite{del_2002_1} have been previously used in discrete tomography. However, additional prior information is typically used in these applications (such as connectedness) to find a binary image that is physiologically realizable. We do not apply such constraints and consider a more general problem.

The main theoretical question addressed in this paper is the following: how many DFT coefficients are needed to uniquely determine a binary matrix? We assume that the measurements are deterministic and available within a low spatial frequency region (pass band) as defined more precisely below. We will also be interested in recovering the image numerically. However, even if uniqueness is guaranteed, recovery of the exact binary matrix without any known structure is  an NP-hard problem~\cite{gardner_1999_1, karp_1972_1}. In the most combinatorially challenging regime wherein roughly half of the entries are ones and the rest are zeros, the binary matrix is not sparse. We therefore cannot use the conventional avenues for improving the computational efficiency of recovery. Instead, we solve the inversion problem using integer linear programming and lattice basis reduction techniques. While naive implementations of integer linear programming quickly hit computational roadblocks and are limited to matrices with $\lesssim 50$ entries (i.e., of the size $7 \times 7$ or less), we have developed algorithms specifically tailored to the problem at hand. The largest image size for which the algorithm was successfully tested is $29 \times 29$ with $841$ pixels. We note that our algorithm allows to recover uniquely any of the $2^{841}$ distinct binary images of this size using only $11 \times 11=121$ DFT coefficients.

We use typewriter-style straight letters to denote matrices (as in ${\tt X}$) and vectors (as in ${\tt x}$). Elements of these structures, as well as other scalar quantities, are denoted by italic letters as in $X_{nm}$ or $x_n$. Fourier transforms are denoted by overhead tilde. For example, $\tilde{\tt X}$ is a matrix of complex DFT coefficients of ${\tt X}$ and $\tilde{X}_{kl}$ is a particular element of $\tilde{\tt X}$. The greatest common divisor of two integers $n$ and $m$ is denoted by $\gcd(n,m)$, and we let $\mathbb{Z}_N$ denote the ring of integers modulo $N$.  

\section{Theoretical background}
\label{sec:theo}

\subsection{Statement of the inverse problem}
\label{sec:theo.IP}

Let ${\tt X}$ be an $N_1 \times N_2$ matrix, and assume that its entries $X_{mn}$ can take only two values, either 0 or 1. The DFT of ${\tt X}$ is given by
\begin{align}
\label{DFT_dir}
\tilde{X}_{kl} = \sum_{m=1}^{N_1}\sum_{n=1}^{N_2}X_{mn} e^{2\pi\cu \left( mk/N_1 + nl/N_2 \right)} \ .
\end{align}
The DFT coefficients $\tilde{X}_{kl}$ are periodic in each index, so that $\tilde{X}_{k l} = \tilde{X}_{k+N_1, l+N_2}$. Since we will mainly be considering the cases when both $N_1$ and $N_2$ are odd, it is sufficient to restrict the indexes $k,l$ to the symmetric intervals
\begin{align}
\label{eq:oddM}
-M_1\le k \le M_1 \ , \ \ -M_2 \le l \le M_2 \ , \ \ \mbox{where} \ M_1=(N_1-1)/2 \ ,  \ \ M_2=(N_2-1)/2 \ .
\end{align}
Then $\tilde{\tt X}$ is the $N_1 \times N_2$ matrix of DFT coefficients $\tilde{X}_{kl}$ with the indexes restricted by \cref{eq:oddM}. The inverse DFT is defined as
\begin{align}
\label{DFT_inv_2D}
X_{mn} = \frac{1}{N_1 N_2} \sum_{k=-M_1}^{M_1} \sum_{l=-M_2}^{M_2} \tilde{X}_{kl} e^{-2\pi\cu \left( mk/N_1 + nl/N_2 \right)} \ ,
\end{align}
which allows for reconstruction of the original matrix ${\tt X}$ from the knowledge of $\tilde{\tt X}$.   Generically, if some of the elements of $\tilde{\tt X}$ are not known, none of the elements of ${\tt X}$ can be reconstructed uniquely. Indeed, it can be seen from \cref{DFT_inv_2D} that changing only one element of $\tilde{\tt X}$ changes all elements of ${\tt X}$. 

However, with the additional constraint that the elements of ${\tt X}$ are binary,  we can hope to achieve unique inversion from only partial knowledge of $\tilde{\tt X}$. We will therefore address the following question: is it possible to reconstruct ${\tt X}$ precisely from the knowledge of only a proper subset of its DFT coefficients? The precise problem definition is as follows.

\begin{definition}
\label{def:IP}
\textcolor{magenta}{\rm We use the acronym {\rm IP}$(N_1,N_2,L_1,L_2)$ to denote the inverse problem of reconstructing a generic binary matrix ${\tt X}$ of known dimension $N_1\times N_2$ from the set of its DFT coefficients $\tilde{X}_{kl}$ with indexes restricted by
\begin{align}
\label{band_def}
|k| \le L_1\le M_1 \ , \ \ |l| \le L_2\le M_2 \ .
\end{align}
We refer to two binary matrices ${\tt X}$ and ${\tt Y}$ as being $(L_1, L_2)$-indistinguishable if they have the same DFT coefficients within the band \cref{band_def}. In the case $L_1 = L_2 = L$, we use the shorthand ``$L$-indistinguishable''.}
\end{definition}

Note that, since ${\tt X}$ is real, we have $\tilde{X}_{-k,-l} = \tilde{X}^*_{k,l}$. Consequently, there are $(L_1+1)(L_2+1) + L_1 L_2$ independent complex coefficients in the band \cref{band_def}, ignoring the pairs that are known conjugates of each other.

The DFT coefficient that is always accessible in this setup is the popcount, $S \equiv \tilde{X}_{00}$, which gives the total number of ones in ${\tt X}$. We thus assume that the value of $S$ is always known. In general, the problem of recovering a binary matrix ${\tt X}$ from a limited set of DFT coefficients is most challenging when $S \sim (N_1 N_2)/2$. This is so because the total number of binary matrices with $S$ nonzero entries is given by ${N_1N_2 \choose S}$.

\subsection{Cyclotomic Integers}
\label{sec:theo:sru}

One key tool that we will use to determine whether a binary matrix is uniquely recoverable from a certain subset of DFT coefficients is analysis of sums of complex exponentials with integer coefficients. If two binary matrices ${\tt X}$ and ${\tt Y}$ have the same $(k,l)$-DFT coefficient, then, by linearity of the DFT, we have $\tilde{Z}_{kl} = 0$, where ${\tt Z} = {\tt X} - {\tt Y}$. Thus, it is useful to know under what conditions a sum of roots of unity can be zero. This problem has been studied extensively. Some relevant results pertaining to the case when the roots of unity are all of the same order are summarized below.  

Consider the $N$-th roots of unity, which are the $N$ solutions to the equation $z^N=1$. These solutions are of the form $e^{2\pi \cu k/N}$, $k=1,\dots, N$. If $\gcd(k,N)=1$, then $e^{2\pi \cu k/N}$ is a primitive root of unity, and it is not a solution to the equation $z^M=1$ for any integer $M<N$. Let $\zeta_N$ be a primitive $N$-th root of unity, and suppose that
\begin{align}
\label{eq:vanish}
\sum_{n=1}^N a_n \left( \zeta_N \right)^n = 0 \ ,
\end{align}
where the coefficients $a_n$ are all integers. The sum appearing on the left-hand side of this expression is known as a {\it cyclotomic integer} -- a linear combination of $N$-th roots of unity with integer coefficients.

First, consider the case when $N$ is a prime number. Since the cyclotomic polynomial $1+x+x^2+\cdot+x^{p-1}$ is irreducible, the equality \cref{eq:vanish} can hold only if $a_n=c$ for all $1\le n\le N$, where $c$ is some constant integer (see proof of Theorem 1 of~\cite{levinson_2021_1}, for example). Thus, an important consequence of irreducibility of the cyclotomic polynomial is that, if a cyclotomic integer of prime order is equal to $0$, then all of its coefficients are the same constant integer.

Such a strong condition does not hold if $N$ is not prime. However, one can still obtain conditions depending on the prime factors of $N$. The main result for integer vanishing sums of roots of unity is given by the following two Lemmas as stated in~\cite{lenstra_1978_1}.

\begin{lemma}
\label{lem:lenstra1}
\textcolor{blue}{\rm Let $M$ be the product of all distinct primes dividing $N$, and let $\zeta_M$ and $\zeta_N$ be primitive $M$-th and $N$-th roots of unity, respectively. Then $\{\left( \zeta_M \right)^m \left( \zeta_N \right)^n \ : \ 1 \le m \le M, \ 1 \le n \le N/M\}$ is the complete set of $N$-th roots of unity. Moreover, for $a_{mn} \in \mathbb{Z}$, the following equation holds
\begin{displaymath}
\sum_{m=1}^{M} \sum_{n=1}^{N/M} a_{mn} \left( \zeta_M \right)^m \left( \zeta_N \right)^n = 0
\end{displaymath}
if and only if
\begin{displaymath}
\sum_{m=1}^M a_{mn} \left( \zeta_M \right)^m = 0 \ \ \ \mbox{\rm for all $n$ such that $1 \le n \le N/M$} \ .
\end{displaymath}}
\end{lemma}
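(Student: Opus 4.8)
The plan is to establish the two assertions in turn: that the displayed products exhaust the $N$-th roots of unity, and then the vanishing criterion. Write $K = N/M$ and $N = \prod_i p_i^{a_i}$, so that $M = \prod_i p_i$ and $K = \prod_i p_i^{a_i - 1}$; in particular $M \mid N$, hence $\mathbb{Q}(\zeta_M) \subseteq \mathbb{Q}(\zeta_N)$.

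For the first assertion I would fix primitive representatives $\zeta_N = e^{2\pi\cu u/N}$ and $\zeta_M = e^{2\pi\cu v/M}$ with $\gcd(u,N)=1$ and $\gcd(v,M)=1$. Then $(\zeta_M)^m(\zeta_N)^n = e^{2\pi\cu(vKm+un)/N}$ is an $N$-th root of unity, and since there are exactly $MK = N$ pairs $(m,n)$ it suffices to show the exponents $vKm+un$ are distinct modulo $N$. Assuming $vKm+un \equiv vKm'+un' \pmod N$, reduction modulo $K$ annihilates the multiples of $K$ and leaves $un \equiv un' \pmod K$; since $\gcd(u,K)=1$ this forces $n=n'$, after which $v(m-m')\equiv 0 \pmod M$ together with $\gcd(v,M)=1$ forces $m=m'$. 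This injectivity proves the first claim.

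For the stated equivalence, the ``if'' direction is immediate: if each inner sum $\sum_{m} a_{mn}(\zeta_M)^m$ vanishes, multiplying by $(\zeta_N)^n$ and summing over $n$ returns the double sum. For the substantive ``only if'' direction I would set $b_n = \sum_{m=1}^M a_{mn}(\zeta_M)^m \in \mathbb{Z}[\zeta_M] \subset \mathbb{Q}(\zeta_M)$, so that the hypothesis becomes $\sum_{n=1}^{K} b_n (\zeta_N)^n = 0$ with coefficients in $\mathbb{Q}(\zeta_M)$. The goal is then precisely to show that $(\zeta_N)^1,\dots,(\zeta_N)^{K}$ are linearly independent over $\mathbb{Q}(\zeta_M)$, for this forces every $b_n=0$. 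By the tower law and the totient identity $\varphi(N)/\varphi(M) = \prod_i p_i^{a_i-1} = K$, the relative degree is $[\mathbb{Q}(\zeta_N):\mathbb{Q}(\zeta_M)] = K$; since $\mathbb{Q}(\zeta_N) = \mathbb{Q}(\zeta_M)(\zeta_N)$, the set $\{1,\zeta_N,\dots,(\zeta_N)^{K-1}\}$ is a $\mathbb{Q}(\zeta_M)$-basis, and multiplying through by the unit $\zeta_N$ shows $\{\zeta_N,\dots,(\zeta_N)^{K}\}$ is a basis as well, in particular linearly independent.

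I expect the converse to be the main obstacle, and within it the crucial point is that the number $N/M$ of consecutive powers $\zeta_N^n$ appearing in the sum coincides exactly with the relative degree $[\mathbb{Q}(\zeta_N):\mathbb{Q}(\zeta_M)]$. This coincidence is exactly what the hypothesis that $M$ is the radical of $N$ provides, and it is what upgrades these powers from merely linearly independent to a genuine basis spanning all of $\mathbb{Q}(\zeta_N)$ over $\mathbb{Q}(\zeta_M)$. Were $M$ not the radical, the totient ratio would exceed $N/M$ and the conclusion would fail. An elementary route avoiding cyclotomic field theory would have to reprove this linear independence directly, but any such argument recomputes $\varphi(N)/\varphi(M)$ in disguise, so the totient identity is the genuine heart of the matter.
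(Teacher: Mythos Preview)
Your argument is correct. The paper does not actually prove this lemma: it is quoted verbatim as a known result from \cite{lenstra_1978_1} and used as a black box, so there is no in-paper proof to compare against.

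Your route via cyclotomic field degrees is the standard one and is exactly what Lenstra's original argument amounts to. The injectivity argument for the first assertion is clean, and for the second assertion your identification of the key point is right: the hypothesis that $M$ is the radical of $N$ is precisely what makes $\varphi(N)/\varphi(M)=N/M$, so that the $N/M$ consecutive powers of $\zeta_N$ form a $\mathbb{Q}(\zeta_M)$-basis rather than merely a linearly independent set of the wrong cardinality. One small remark: strictly speaking you need that $\zeta_N$ is a primitive element for the extension $\mathbb{Q}(\zeta_N)/\mathbb{Q}(\zeta_M)$ to conclude that its first $K$ powers are a basis; this is immediate since $\mathbb{Q}(\zeta_M)\subset\mathbb{Q}(\zeta_N)=\mathbb{Q}(\zeta_N)$, but it is worth saying explicitly rather than just invoking the degree count.
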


\begin{lemma}
\label{lem:lenstra2}
\textcolor{blue}{\rm Let $N=pM$, where $p$ is prime and does not divide $M$, and let $\zeta_M$ and $\zeta_p$ be primitive $M$-th and $p$-th roots of unity, respectively. Then $\{ \left( \zeta_M \right)^m \left( \zeta_p \right)^n : 1 \le m \le M, \ 1 \le n \le p\}$ is the complete set of $N$-th roots of unity. Then, for $a_{nm} \in  \mathbb{Z}$, the following equality holds
\begin{displaymath}
\sum_{m=1}^{M} \sum_{n=1}^{p} a_{mn} \left( \zeta_M \right)^m \left(\zeta_p \right)^n = 0
\end{displaymath}
if and only if
\begin{align}
\label{eq:2divisors_1}
\sum_{m=1}^M a_{mn} \left( \zeta_M \right)^m = \sum_{m=1}^M a_{m1} \left( \zeta_M \right)^m \ \ \ \mbox{\rm for all $n$ such that $1 < n \le p$} \ .
\end{align}}
\end{lemma}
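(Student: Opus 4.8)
The plan is to read the displayed vanishing sum as a relation inside the cyclotomic field $\mathbb{Q}(\zeta_N)$ and to exploit the fact that, because $p \nmid M$, the $p$-th cyclotomic polynomial stays irreducible over the subfield $\mathbb{Q}(\zeta_M)$. Concretely, I would first regroup the double sum by setting $b_n = \sum_{m=1}^M a_{mn}\zeta_M^m \in \mathbb{Z}[\zeta_M]$, so that the hypothesis becomes $\sum_{n=1}^p b_n \zeta_p^n = 0$ and the desired conclusion becomes $b_2 = b_3 = \dots = b_p = b_1$ (i.e.\ all $b_n$ equal). This reduces the statement to understanding when a $\mathbb{Z}[\zeta_M]$-linear combination of $p$-th roots of unity can vanish.

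The key structural input is a degree computation. Since $\zeta_M = \zeta_N^{\,p}$ and $\zeta_p = \zeta_N^{\,M}$ are powers of $\zeta_N$, and a B\'ezout relation $ap + bM = 1$ yields $\zeta_N = \zeta_N^{\,ap+bM} = \zeta_M^{\,a}\zeta_p^{\,b}$, we have $\mathbb{Q}(\zeta_N) = \mathbb{Q}(\zeta_M)(\zeta_p)$. Multiplicativity of Euler's totient $\varphi$ on coprime arguments gives $[\mathbb{Q}(\zeta_N):\mathbb{Q}] = \varphi(pM) = \varphi(M)(p-1)$, so by the tower law $[\mathbb{Q}(\zeta_N):\mathbb{Q}(\zeta_M)] = p-1$. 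Because the $p$-th cyclotomic polynomial $\Phi_p(x) = 1 + x + \dots + x^{p-1}$ is monic of degree $p-1$ and has $\zeta_p$ as a root, it must be the minimal polynomial of $\zeta_p$ over $\mathbb{Q}(\zeta_M)$; in particular $\{1, \zeta_p, \dots, \zeta_p^{\,p-2}\}$ is a basis of $\mathbb{Q}(\zeta_N)$ as a vector space over $\mathbb{Q}(\zeta_M)$.

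Granting this, the remainder is linear algebra in a fixed basis. Using $\zeta_p^{\,p} = 1$ and $\zeta_p^{\,p-1} = -\sum_{j=0}^{p-2}\zeta_p^{\,j}$, I would re-expand $\sum_{n=1}^p b_n \zeta_p^n = b_p + \sum_{n=1}^{p-1} b_n \zeta_p^n$ in the above basis. The constant term becomes $b_p - b_{p-1}$, and the coefficient of $\zeta_p^{\,j}$ for $1 \le j \le p-2$ becomes $b_j - b_{p-1}$. Since every $b_n$ lies in $\mathbb{Q}(\zeta_M)$ and the basis is linearly independent over that field, the relation $\sum_n b_n \zeta_p^n = 0$ forces each of these coefficients to vanish, giving $b_1 = b_2 = \dots = b_p$, which is exactly \cref{eq:2divisors_1}. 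The converse is immediate: if all $b_n$ equal $b_1$, then $\sum_{n=1}^p b_n \zeta_p^n = b_1 \sum_{n=1}^p \zeta_p^n = 0$, since the full set of $p$-th roots of unity sums to zero.

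I expect the main obstacle to be justifying the irreducibility of $\Phi_p$ over $\mathbb{Q}(\zeta_M)$ cleanly -- equivalently, that adjoining $\zeta_p$ to $\mathbb{Q}(\zeta_M)$ genuinely raises the degree by the full factor $p-1$ rather than collapsing. This is precisely the step where the coprimality hypothesis $p \nmid M$ is essential, and it is the only nonelementary ingredient; everything downstream is a coordinate computation against a fixed basis and a one-line converse.
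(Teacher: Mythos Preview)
Your argument is correct and is essentially the standard proof of this fact: regroup into coefficients $b_n\in\mathbb{Z}[\zeta_M]$, use the degree count $[\mathbb{Q}(\zeta_N):\mathbb{Q}(\zeta_M)]=\varphi(pM)/\varphi(M)=p-1$ (valid precisely because $p\nmid M$) to see that $\Phi_p$ is the minimal polynomial of $\zeta_p$ over $\mathbb{Q}(\zeta_M)$, and then read off the vanishing of all $b_j-b_{p-1}$ from linear independence of $1,\zeta_p,\dots,\zeta_p^{p-2}$. The converse is immediate, as you note.

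The paper itself does not supply a proof of this lemma; it is quoted verbatim from \cite{lenstra_1978_1} and used as a black box (see the sentence preceding \cref{lem:lenstra1}). So there is nothing to compare your approach against within the paper. One small point you could make more explicit, since the lemma also asserts it: the claim that $\{\zeta_M^m\zeta_p^n\}$ exhausts the $N$-th roots of unity without repetition is exactly the Chinese Remainder Theorem for $\mathbb{Z}/N\mathbb{Z}\cong\mathbb{Z}/M\mathbb{Z}\times\mathbb{Z}/p\mathbb{Z}$, which is implicit in your B\'ezout observation $\zeta_N=\zeta_M^a\zeta_p^b$ but not stated outright.
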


\Cref{lem:lenstra1} is used to analyze roots of unity of order $N$ where $N$ has at least one prime power as a divisor.  \Cref{lem:lenstra2} provides a tractable condition when $N$ has only two prime divisors. In this case, $M$ in \cref{eq:2divisors_1} is prime; therefore, by subtracting the two sums, we have a vanishing cyclotomic integer as in \cref{eq:vanish}. Thus, we can conclude that, for each fixed $n$, $(a_{mn} - a_{m1})$ is constant for $1 \le m \le M$. If $N$ has more than two prime divisors, it is much harder to analyze \cref{eq:2divisors_1} due to existence of the so-called asymmetrical sums~\cite{conway_1976_1, lam_2000_1}.

Building on these ideas, our previous work~\cite{levinson_2021_1} developed the theory of recovering binary one-dimensional signals from limited sets of DFT coefficients. Results were obtained for vectors of prime length $N$, and of length of the form $N=pq$ where $p$ and $q$ are two (possibly, equal) prime factors. Two-dimensional binary DFT requires a separate analysis, but some results can be generalized from the one-dimensional setting. We therefore briefly summarize the pertinent one-dimensional theory below.

\subsection{Summary of results on binary vectors}
\label{sec:theo.1D}

For vectors ${\tt x}$ of length $N$, the one-dimensional DFT is defined as 
\begin{equation}
\tilde{x}_m = \sum_{n=1}^N x_ne^{2\pi i m/N} \ .
\end{equation}
When ${\tt x}$ is known to be a binary vector of prime length $p$, inversion is unique with the knowledge of the first two DFT coefficients $\tilde{x}_0$ and $\tilde{x}_1$.   This is a consequence of the irreducibility of cyclotomic polynomials (see Theorem 1 of \cite{levinson_2021_1}). For binary vectors of length $pq$ (where, possibly, $p=q$), the results are more subtle. Many such vectors are uniquely recoverable from only their first two DFT coefficients, but some vectors, which have a special structure, are not. The result is stated below as \cref{thm:1D}, which was proved in a rephrased form in~\cite{levinson_2021_1}.

\begin{lemma}
\label{thm:1D}
\textcolor{blue}{\rm Let ${\tt x}$ be a binary vector of length $pq$, where $p$ and $q$ are (not necessarily distinct) prime numbers. Then ${\tt x}$ is not uniquely determined by its DFT coefficients $\tilde{x}_0$ and $\tilde{x}_1$ (that is, there exists a distinct vector ${\tt y} \ne {\tt x}$ with $\tilde{y}_0 = \tilde{x}_0$ and $\tilde{y}_1 = \tilde{x}_1$) if and only if, for $s=p$ or $s=q$, ${\tt x}$ has indexes $a,b\in\{1,\dots,pq\}$ such that the following two conditions hold simultaneously: 
\begin{align}
\label{eq:polygon}
\Big{\{} x_\alpha = 1 \ \ \mbox{\rm for all $\alpha = a\pmod s$} \Big{\}} \ \ \ \mbox{\rm AND} \ \ \ 
\Big{\{} x_\beta  = 0 \ \ \mbox{\rm for all $\beta  = b \pmod s$} \Big{\}} \ .
\end{align}
Moreover, if ${\tt x}$ is not uniquely determined by $\tilde{x}_0$ and $\tilde{x}_1$, then a distinct binary vector ${\tt y}$ is 1-indistinguishable from ${\tt x}$ if and only if ${\tt y}$ satisfies \cref{eq:polygon} for the same $a$ and $b$, except for the permutation $0 \leftrightarrow 1$, that is we write $y_\alpha=0$ and $y_\beta=1$.}
\end{lemma}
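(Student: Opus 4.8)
The plan is to phrase everything through the difference $\mathtt{z} = \mathtt{x} - \mathtt{y}$. Two binary vectors are $1$-indistinguishable exactly when $\tilde{z}_0 = 0$ and $\tilde{z}_1 = 0$, where $z_n \in \{-1,0,1\}$, and the binarity of $\mathtt{y} = \mathtt{x} - \mathtt{z}$ is equivalent to the compatibility requirement that $z_n = 1$ forces $x_n = 1$ and $z_n = -1$ forces $x_n = 0$ (these hold automatically when $\mathtt{z}$ is a difference of binary vectors). Since $\mathtt{y} \neq \mathtt{x}$ iff $\mathtt{z} \neq 0$, the statement reduces to classifying the nonzero $\mathtt{z} \in \{-1,0,1\}^{pq}$ with $\sum_n z_n = 0$ and $\sum_n z_n \zeta_{pq}^{\,n} = 0$, and then matching the locations of the $\pm 1$ entries of $\mathtt{z}$ against $\mathtt{x}$.

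First I would characterize the solutions of $\sum_n z_n \zeta_{pq}^{\,n} = 0$ using the Lenstra lemmas after reindexing by the CRT bijection $n \leftrightarrow (n \bmod q,\, n \bmod p)$, writing $z_{m,n}$ for the entry with residue $m$ mod $q$ and $n$ mod $p$. When $p \neq q$, \cref{lem:lenstra2} (with prime $p$ and $M = q$), together with the irreducibility of the $q$-th cyclotomic polynomial, yields the additive structure $z_{m,n} = r_m + c_n$ for integers $r_m, c_n$. When $p = q$, \cref{lem:lenstra1} (with $M = p$) forces $z_{m,n}$ to be constant in $m$, i.e.\ $\mathtt{z}$ is constant on each residue class mod $p$.

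Next I would impose $z_{m,n} \in \{-1,0,1\}$ on the additive form $r_m + c_n$ in the case $p \neq q$. Since all entries lie in $\{-1,0,1\}$, the ranges of $(r_m)$ and $(c_n)$ have total width at most $2$, so either $(r_m)$ is constant (and $\mathtt{z}$ depends only on the residue mod $p$), or $(c_n)$ is constant (residue mod $q$), or both families have range exactly $1$, giving the rectangular pattern $\mathtt{z} = -1$ on $A \times B$, $\mathtt{z} = +1$ on $A' \times B'$, and $\mathtt{z} = 0$ elsewhere, where $A,A'$ and $B,B'$ are the two-valued level sets and are all nonempty. The crucial step, and the one I expect to be the main obstacle, is to eliminate this last, genuinely two-dimensional case. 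Here I would invoke the popcount constraint $\sum_n z_n = 0$: it reads $|A'||B'| = |A||B|$, which with $|A|+|A'| = q$ and $|B|+|B'| = p$ simplifies to $p|A| + q|B| = pq$; reducing mod $p$ and using $\gcd(p,q)=1$ forces $p \mid |B|$, contradicting $0 < |B| < p$. Thus for $p \neq q$ the rectangular case cannot meet the popcount condition, and for $p = q$ it never arises; in every surviving case $\mathtt{z}$ is constant on the residue classes of a single modulus $s \in \{p,q\}$, with class-values in $\{-1,0,1\}$ summing to zero.

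Finally I would read off both directions. Because $\sum_n z_n = 0$ and $\mathtt{z} \neq 0$, some class $a$ carries value $+1$ and some class $b$ carries value $-1$; compatibility then gives $x_\alpha = 1$ on class $a$ and $x_\beta = 0$ on class $b$, which is precisely \cref{eq:polygon}, proving the forward implication. For the converse, given $a,b$ satisfying \cref{eq:polygon} I would set $\mathtt{z} = +1$ on class $a$, $\mathtt{z} = -1$ on class $b$, and $0$ elsewhere; one checks directly that $\tilde{z}_0 = 0$, and since $\mathtt{z}$ is constant on residue classes mod $s$ each inner geometric sum over such a class equals a sum of all $(pq/s)$-th roots of unity and hence vanishes, giving $\tilde{z}_1 = 0$, so $\mathtt{y} = \mathtt{x} - \mathtt{z}$ is a distinct binary $1$-indistinguishable partner. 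Reading the swap structure off the admissible $\mathtt{z}$ then yields the ``moreover'' statement, since $\mathtt{y} = \mathtt{x} - \mathtt{z}$ is obtained from $\mathtt{x}$ exactly by the exchange $x \equiv 1 \mapsto y \equiv 0$ on class $a$ and $x \equiv 0 \mapsto y \equiv 1$ on class $b$.
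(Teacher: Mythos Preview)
The paper does not prove this lemma; it is quoted from the authors' earlier work \cite{levinson_2021_1} and invoked as a known result, so there is no in-paper argument to compare against.

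Your proposal is correct. The reduction to classifying nonzero $\{-1,0,1\}$-valued $\mathtt{z}$ with $\tilde z_0=\tilde z_1=0$ is the right framing; the additive structure $z_{m,n}=r_m+c_n$ via \cref{lem:lenstra2} (and the class-constancy via \cref{lem:lenstra1} when $p=q$) is derived correctly; and the range-of-sums bound that splits into the three subcases is sound. The step you flag as the main obstacle --- eliminating the genuinely two-dimensional rectangular pattern by reducing $p|A|+q|B|=pq$ modulo $p$ to force $p\mid|B|$ with $0<|B|<p$ --- is exactly the arithmetic needed, and it works because $\gcd(p,q)=1$ in that case. The converse construction and the verification that $\tilde z_1=0$ via the vanishing of complete geometric sums over each residue class are also fine.

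One minor remark on the ``moreover'' clause: the lemma as stated in the paper is slightly informal, since an admissible $\mathtt{z}$ may be $+1$ on several all-ones classes and $-1$ on equally many all-zeros classes, not just a single pair $(a,b)$. Your classification captures this fully; what the paper actually uses downstream (in the proof of \cref{thm:2D_rect}) is only the existence of \emph{at least one} such pair, which your argument delivers.
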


\section{Uniqueness results}
\label{sec:uni}

In this section, we state and prove uniqueness results for binary matrices of the size $N_1 \times N_2$. Due to the complexity associated with the asymmetric sums of roots of unity, we assume below that the total number of pixels, $N_1N_2$, has no more than two prime divisors. The cases we cover are not exhaustive, but give a taste for the type of super-resolution one can obtain for binary matrices. 

\subsection{Row- and column-wise popcounts}
\label{sec:theo.rc}

As previously mentioned, the global popcount (the total number of ones in ${\tt X}$) is given by $S = \tilde{X}_{00}$. We also define the row- and column-wise popcounts $r_m$ and $c_n$ as
\begin{align}
\label{rm_def}
r_m = \sum_{n=1}^{N_2}X_{mn} \ , \ \ c_n=\sum_{m=1}^{N_1}X_{mn} \ .
\end{align}
If the dimensions $N_1$ and $N_2$ are both prime, the next two lowest-order DFT coefficients of ${\tt X}$ fix all $r_m$ and $c_n$. For example, the coefficient $\tilde{X}_{10}$ is given by
\begin{align}
\label{X10_DFT}
\tilde{X}_{10} = \sum_{m=1}^{N_1} r_m  e^{2\pi\cu m/N_1} \ .
\end{align}
The right-hand side of \cref{X10_DFT} is a cyclotomic integer -- a sum of powers of a primitive root of unity with integer coefficients. Assuming that the global popcount $S$ and $\tilde{X}_{10}$ are known, all $r_m$'s are also known (as the cyclotomic integers are irreducible). This statement is a slight generalization of the result of ~\cite{levinson_2021_1} where we proved that \cref{X10_DFT} is uniquely invertible for binary $r_m$; here we say that it is uniquely invertible for integer $r_m$. The proof is a trivial extension of the proof given in~\cite{levinson_2021_1}. Similarly, the knowledge of $\tilde{X}_{01}$ fixes all column-wise popcounts $c_n$. Note that this geometric equivalence is only true when $N_1$ and $N_2$ are prime. 

Thus, the knowledge of $\tilde{X}_{00}$, $\tilde{X}_{01}$ and $\tilde{X}_{10}$ is sufficient to recover the global and the row- and column-wise popcounts assuming $N_1$ and $N_2$ are prime. In some special cases, this information defines uniquely the whole binary matrix (a trivial example is when $S=1$). In general, this is clearly false. The problem of determining a binary matrix by its row- and column-wise sums has been extensively studied and solved~\cite{ryser_1957_1, ryser_1960_1}. In particular, two binary matrices ${\tt X}$ and ${\tt Y}$ have the same row- and column-wise sums if they differ by an {\it interchange}, where an interchange is defined by a quadruple $(k,l,m,n)$ such that
\begin{align*}
\begin{bmatrix}
X_{kl} & X_{kn} \\ 
X_{ml} & X_{mn}
\end{bmatrix}
=
\begin{bmatrix}
1 & 0 \\ 
0 & 1
\end{bmatrix}
\ \ , \ \ \ \ 
\begin{bmatrix}
Y_{kl} & Y_{kn} \\ 
Y_{ml} & Y_{mn}
\end{bmatrix}
=
\begin{bmatrix}
0 & 1 \\ 1 & 0 
\end{bmatrix} \ .
\end{align*}
Moreover, any two matrices with equivalent row and column sums can be obtained from one another by a sequence of such interchanges. 

These results imply that, except for some very special cases, uniquely determining a binary matrix from its row- and column-wise popcounts is an impossible task. In what follows, we investigate how many additional DFT coefficients are required to make all binary matrices of a given size uniquely recoverable. Below, we study matrices of dimensions $N_1 \times N_2$ and consider the cases (i) when $N_1$ and $N_2$ are distinct primes,  (ii) square matrices with $N_1=N_2=N$ and prime $N$, and (iii) square matrices with $N=p^\alpha$ where $p$ is prime and $\alpha>1$ is an integer.  

\subsection{Matrices of sizes $N_1\times N_2$ with distinct primes $N_1$ and $N_2$}
\label{sec:uni.rect}

For rectangular matrices with prime dimensions, we can prove our strongest uniqueness result. With the knowledge of just one additional DFT coefficient (in addition to $\tilde{X}_{00}$, $\tilde{X}_{01}$ and $\tilde{X}_{10}$), the binary matrix ${\tt X}$ can be uniquely recovered. In line with our assumption of low frequency coefficients becoming available first, this additional DFT coefficient is $\tilde{X}_{11}$. Note that this is a stronger restriction than the notation IP$(N_1,N_2,1,1)$ conveys, which includes all DFT coefficients in the pass band with $|k|,|l|\le 1$. However, we will show that uniqueness does not require the knowledge of $\tilde{X}_{1,-1}$ or of its equivalent conjugate pair.

\begin{theorem}
\label{thm:2D_rect}
\textcolor{red}{\rm Consider a generic binary matrix ${\tt X}$ of dimension $N_1 \times N_2$, where $N_1$ and $N_2$ are prime and $N_1\neq N_2$. If the four DFT coefficients $\tilde{X}_{00}$, $\tilde{X}_{10}$, $\tilde{X}_{01}$, and $\tilde{X}_{11}$ are known, then the inverse problem of reconstructing ${\tt X}$ is uniquely solvable.}
\end{theorem}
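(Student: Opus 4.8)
The plan is to argue via a difference matrix. Suppose two binary matrices ${\tt X}$ and ${\tt Y}$ share the four coefficients $\tilde{X}_{00}$, $\tilde{X}_{10}$, $\tilde{X}_{01}$, $\tilde{X}_{11}$, and set ${\tt Z} = {\tt X} - {\tt Y}$. By linearity of the DFT the four corresponding coefficients $\tilde{Z}_{00}$, $\tilde{Z}_{10}$, $\tilde{Z}_{01}$, $\tilde{Z}_{11}$ all vanish, while every entry $Z_{mn} \in \{-1,0,1\}$. The goal is to show that these four vanishing conditions force ${\tt Z} = 0$, whence ${\tt X} = {\tt Y}$.

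First I would extract the row and column sums. Writing $R_m = \sum_n Z_{mn}$, the coefficient $\tilde{Z}_{10} = \sum_m R_m e^{2\pi i m/N_1}$ is a cyclotomic integer of prime order $N_1$; by irreducibility of the cyclotomic polynomial it vanishes only if all $R_m$ share a common value $c$, and then $\tilde{Z}_{00} = \sum_m R_m = N_1 c = 0$ forces $c = 0$. Hence every row sum of ${\tt Z}$ is zero, and symmetrically, using $\tilde{Z}_{01}$ together with the primality of $N_2$, every column sum of ${\tt Z}$ is zero.

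The heart of the argument, and the step I expect to be the main obstacle, is exploiting $\tilde{Z}_{11} = 0$. Since $N_1 \neq N_2$ are distinct primes, $\gcd(N_1,N_2) = 1$, and $\{\zeta_{N_1}^m \zeta_{N_2}^n : 1 \le m \le N_1,\ 1 \le n \le N_2\}$ is exactly the set of $(N_1 N_2)$-th roots of unity, with $\zeta_{N_1}^m \zeta_{N_2}^n = e^{2\pi i(m/N_1 + n/N_2)}$. Thus $\tilde{Z}_{11} = \sum_{m,n} Z_{mn}\,\zeta_{N_1}^m \zeta_{N_2}^n = 0$ is precisely the hypothesis of \cref{lem:lenstra2} with $M = N_1$ and $p = N_2$. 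The lemma yields $\sum_m Z_{mn}\zeta_{N_1}^m = \sum_m Z_{m1}\zeta_{N_1}^m$ for every $n$, that is $\sum_m (Z_{mn} - Z_{m1})\zeta_{N_1}^m = 0$; invoking irreducibility once more (prime $N_1$) shows that $Z_{mn} - Z_{m1}$ is independent of $m$. Consequently ${\tt Z}$ has the additive structure $Z_{mn} = u_m + d_n$ for some integer vectors $u$ and $d$. This is where distinctness of the primes is essential: for $N_1 = N_2$ the relevant decomposition is governed by \cref{lem:lenstra1} instead, and the clean additive conclusion fails, which is presumably why the square prime case requires a separate treatment.

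It then remains to combine this additive structure with the vanishing row and column sums. Substituting $Z_{mn} = u_m + d_n$ into $R_m = N_2 u_m + \sum_n d_n = 0$ shows that all $u_m$ equal a common constant $u$, and substituting into $C_n = N_1(u + d_n) = 0$ then forces $d_n = -u$ for every $n$. Hence $Z_{mn} = u - u = 0$ identically, giving ${\tt X} = {\tt Y}$ and establishing uniqueness. I expect this final bookkeeping to be routine; the genuine content lies in the passage from $\tilde{Z}_{11} = 0$ to the additive structure via \cref{lem:lenstra2}.
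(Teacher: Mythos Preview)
Your proof is correct, and it takes a genuinely different route from the paper. The paper argues by contradiction through the one-dimensional theory: it uses the Chinese Remainder Theorem to unroll ${\tt X}$ and ${\tt Y}$ into binary vectors ${\tt x}$ and ${\tt y}$ of length $T=N_1N_2$ via the index map $\alpha = mN_2 + nN_1 \pmod T$, observes that $\tilde{X}_{11} = \tilde{x}_1$, and then invokes the full 1D classification result \cref{thm:1D} to force a whole row of ones in ${\tt X}$ against a whole row of zeros in ${\tt Y}$, contradicting equality of row sums. By contrast, you work entirely with the difference matrix ${\tt Z}$ and apply \cref{lem:lenstra2} directly to $\tilde{Z}_{11}=0$, extracting the additive structure $Z_{mn}=u_m+d_n$, after which the vanishing of row and column sums finishes the argument by linear algebra. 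Your route is more self-contained: it never leaves the 2D setting, does not rely on the prior 1D classification \cref{thm:1D}, and makes transparent exactly where the hypothesis $N_1\neq N_2$ is used (namely, that $N_2$ is coprime to $N_1$ so that \cref{lem:lenstra2} applies with $M=N_1$, $p=N_2$). The paper's route, on the other hand, highlights the structural link between the rectangular 2D problem and the length-$N_1N_2$ 1D problem, which is conceptually useful elsewhere in the paper.
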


\begin{proof}
Denote the total number of elements as $T=N_1 N_2$. Let ${\tt X}$ and ${\tt Y}$ be two distinct $N_1 \times N_2$ binary matrices. Suppose that $\tilde{X}_{kl} = \tilde{Y}_{kl}$ for $0\le k,l \le 1$. Consider the (1,1)-th DFT coefficient of ${\tt X}$,
\begin{align}
\label{X11_DFT}
\tilde{X}_{11} = \sum_{m=1}^{N_1}\sum_{n=1}^{N_2} X_{mn} e^{2\pi\cu \left(m/N_1 + n/N_2 \right)}
= \sum_{m=1}^{N_1} \sum_{n=1}^{N_2} X_{mn} e^{2\pi\cu \left( m N_2 + n N_1 \right) / T} \ .
\end{align}
As $N_1$ and $N_2$ are distinct primes, $e^{2\pi\cu \left(N_1 + N_2 \right)/T}$ is a primitive root of unity of $T$-th order, with the complete set of $T$-th roots of unity given by 
\begin{align*}
\{ e^{2\pi\cu \left( m N_2 + nN_1 \right) / T } \ : \ 1 \le m \le N_1 \ , \ 1 \le n\le N_2\} \ .
\end{align*}  
These are the roots that appear in \cref{X11_DFT}, suggesting that the sum is the one-dimensional DFT coefficient of some vector ${\tt x}$. Let ${\tt x}$ be the binary vector of length $T$ formed by unrolling the entries of ${\tt X}$ according to
\begin{align}
\label{1D_to_2D_alpha}
x_{\alpha} = X_{mn} \ , \ \ \alpha = mN_2 + n N_1 \pmod T \ .
\end{align}
We can thus rewrite \cref{X11_DFT} as
\begin{align*}
\tilde{X}_{11} = \sum_{n=1}^{T} x_n e^{2\pi\cu (n/T)} = \tilde{x}_1 \ ,
\end{align*}
which is equivalent to the first DFT coefficient of the one-dimensional binary vector ${\tt x}$. Similarly define the binary vector ${\tt y}$ such that $\tilde{Y}_{11} = \tilde{y}_1$. Thus, we have two distinct one-dimensional binary vectors, ${\tt x}$ and ${\tt y}$ of length $T$ each, which agree at their first two DFT coefficients. By \cref{thm:1D}, ${\tt x}$ and ${\tt y}$ must agree at all entries, except on at least one pair of indexes $a,b\in\{1,\dots,T\}$ that satisfy \cref{eq:polygon}. Assuming $s = N_1$ in \cref{thm:1D}, we have $x_\alpha=1$ for all $\alpha=a \pmod{N_1}$. Applying this result to \cref{1D_to_2D_alpha}, there exists a fixed value of $m = m_0$ such that $X_{m_0 n}=1$ for all $1\le n\le N_2$. We can similarly conclude that $Y_{m_0 n}=0$ for all $1\le n\le N_2$. However, $\tilde{X}_{10}$ and $\tilde{Y}_{10}$ fix the row sums of the matrices ${\tt X}$ and ${\tt Y}$. As  $\tilde{X}_{10} = \tilde{Y}_{10}$ by assumption, ${\tt X}$ and ${\tt Y}$ must have the same row sums. We have, in contradiction, already shown that the $m_0$-th row of ${\tt X}$ has a row sum of $N_2$ whereas the same row of ${\tt Y}$ sums to $0$. Identical logic holds for the case when $s=N_2$ in \cref{thm:1D} by, instead, finding a fixed column index that has differing sums for ${\tt X}$ and ${\tt Y}$. This contradicts the assumption that $\tilde{X}_{01} = \tilde{Y}_{01}$. Thus, by \cref{thm:1D}, as ${\tt x}$ and ${\tt y}$ agree on their 1st one-dimensional DFT coefficient, but do not differ at the stated indexes, they must be equal. Hence, by \cref{1D_to_2D_alpha}, ${\tt X} = {\tt Y}$, making the solution to the inverse problem unique.
\end{proof}

While results for binary one-dimensional vectors were used in the proof of \cref{thm:2D_rect}, the conclusion of this theorem is significantly stronger than in the one-dimensional case. Indeed, for vectors of length $T = N_1 N_2$ with $N_1<N_2$ being both prime, one requires $L = N_2$ to guarantee uniqueness by Lemma 2 of~\cite{levinson_2021_1}. In contrast, for matrices of the dimension $N_1 \times N_2$, the required number of DFT coefficients does not increase with $N_1$ or $N_2$ but rather stays fixed at $4$.

\subsection{Square matrices of prime order $N$}
\label{sec:uni.sqp}

While results for binary vectors of length $T = N_1 N_2$ were used in the above proof of \cref{thm:2D_rect} for rectangular matrices, we cannot use the same approach for square matrices. This is so because, for a binary matrix ${\tt X}$ of dimension $N \times N$, the expression for $\tilde{X}_{11}$ no longer involves a complete, non-repeating set of roots of unity as in \cref{X11_DFT}. Instead, we have
\begin{align}
\label{X11_square}
\tilde{X}_{11} = \sum_{m=1}^N \sum_{n=1}^N X_{mn} e^{2\pi \cu (m+n)/N} \ .
\end{align}
The exponential factors in the right-hand side of \cref{X11_square} are the $N$-th roots of unity, and each root appears $N$ times (there are $N^2$ terms in the summation). Albeit different than in the rectangular case, equation \cref{X11_square} contains useful geometric information about the elements of ${\tt X}$, similarly to the coefficients $\tilde{X}_{10}$ and $\tilde{X}_{01}$, which contain information about the number of nonzero entries in each row and column, respectively. To see that this is the case, we rewrite \cref{X11_square} by grouping the roots of unity as \hspace*{6cm}
\begin{wrapfigure}{r}{4.2cm}
\centering
\includegraphics[width=4.0cm]{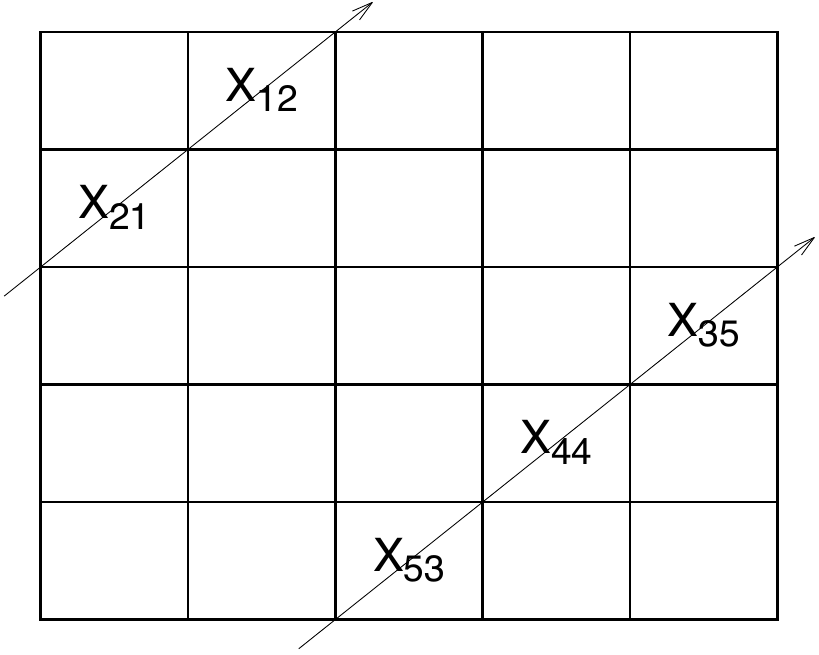}
\caption{\rm \label{fig:p5} Solutions to \cref{eq:line_mod} with $N = 5$ and $j=3$. This line is referred to as having slope -1 as the column index increases by 1 as the row index decreases by 1.}
\end{wrapfigure}
\begin{align}
\tilde{X}_{11}=\sum_{j=1}^N \ \eta_j \ e^{2\pi \cu (j / N)}  \ , \ \ \mbox{where} \ \eta_j = \hspace*{-6mm} \sum_{\substack{m,n=1 \\ m+n=j \, ({\rm mod}\, N)}}^N \hspace*{-6mm} X_{mn} \ .
\end{align}  
Using the fact that the cyclotomic integers are irreducible, we conclude that the knowledge of $\tilde{X}_{11}$ is equivalent to knowing the values of $\eta_j$ for $j=1, \dots, N$. This, in turn, tells us how many ones are in each subset (labeled by $j$) of elements $X_{mn}$ with indexes $m,n$ satisfying the equation 
\begin{align}
\label{eq:line_mod}
m+n = j \pmod N \ . 
\end{align}
For each fixed $j$, the $N$ solutions to \cref{eq:line_mod} lie along a line of the slope $-1$, which may be periodically extended. This is illustrated in \cref{fig:p5}. Thus, the value of $\tilde{X}_{11}$ tells us how many ones are in each line of slope -1. In this sense, $\tilde{X}_{11}$ provides projection information similar to that in $\tilde{X}_{10}$ and $\tilde{X}_{01}$, but along the lines that are neither horizontal nor vertical but have the slope of $-1$.  

It is a straightforward extension to show that $\tilde{X}_{kl}$ contains information equivalent to the projection along a periodic line defined by the equation
\begin{align}
\label{eq:line_mod2}
k n + l m = j \pmod N \ .
\end{align}
We say that the slope of the line defined by \cref{eq:line_mod2} is $-l / k$. Note that the expression \cref{eq:line_mod2} is valid for $k\neq 0$. If $k=0$, $\tilde{X}_{0 l}$ counts the number of nonzero entries along the vertical lines. An immediate consequence of the above observation is that $\tilde{X}_{kl}$ and $\tilde{X}_{k'l'}$ provide the same information if the periodic line classes with $(k,l)$ and $(k',l')$ have the same slope. This happens whenever
\begin{align}
\label{eq:equiv_slope}
(k,l) \sim (k',l') \iff kl' = k'l \pmod N  \ ,
\end{align}
where we have stated the condition as an equivalence relation. Note that, in general, this is a valid equivalence relation whenever at least $k$ and $k'$ (or $l$ and $l'$) are relatively prime to the congruent modulo number, which is always true if $N$ is prime, as we assume here. 

We are thus considering a periodic extension of the standard problem concerning row and column sums of binary matrices considered in~\cite{ryser_1957_1, ryser_1960_1}. Instead of asking when a binary matrix can be uniquely determined by its projections along horizontal and vertical lines, we are interested in how many periodic projections (and in which directions) are sufficient to uniquely recover an $N \times N$ binary matrix. The key idea here is that, while in general we need all $N^2$ DFT coefficients to determine the original matrix (or $(N^2+1)/2$ by symmetry when the matrix is known to be real), in this binary setup, many of the DFT coefficients  contain the same information as another coefficient. For example, it is easy to see that, for any prime $N$, $\tilde{X}_{2,0}$ also gives the individual popcount along each row of ${\tt X}$ and provides no additional information compared to $\tilde{X}_{1,0}$. As another example, let $N=23$; then, according to \cref{eq:equiv_slope}, $\tilde{X}_{7,5}$ provides the same information as $\tilde{X}_{1,6}$. Thus, it is clear that we should not need all $N^2$ DFT coefficients to recover ${\tt X}$ as there are fewer than $N^2$ independent coefficients. The following lemma, originally due to Thue, is the key algebraic result for determining how many coefficients are required for unique recovery.

\begin{lemma}
\label{lemma3}
\textcolor{blue}{Let $N$ be prime and define $L_0 = \lfloor \sqrt{N} \rfloor$. Let $k$ and $l$ be integers such that $ |k|,|l| \le N-1$. Then there exist integers $k'$ and $l'$ with $ |k'|,|l'| \le L_0$ such that $kl'=k'l \pmod N$.}
\end{lemma}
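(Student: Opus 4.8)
The plan is to recognize this statement as the classical lemma of Thue and to prove it by a pigeonhole (counting) argument. Since $N$ is prime, $\mathbb{Z}_N$ is a field, so every slope class is determined by a single residue, and the whole problem reduces to approximating one modular ratio by a quotient of small integers. The only quantitative input will be the inequality $(L_0+1)^2 > N$, which is immediate from the definition $L_0 = \lfloor \sqrt{N} \rfloor$, since $L_0 + 1 > \sqrt{N}$ forces $(L_0+1)^2 \ge N+1$.

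First I would dispose of the degenerate directions. If $k \equiv 0 \pmod N$, then because $|k| \le N-1$ we have $k = 0$, and the target congruence $kl' \equiv k'l \pmod N$ becomes $k'l \equiv 0$, so $(k',l') = (0,1)$ works and lies in the required box. The symmetric case $l \equiv 0$ is handled by $(k',l') = (1,0)$, and $(k,l)\equiv(0,0)$ is trivial. Hence I may assume $k \not\equiv 0 \pmod N$, so that $k$ is invertible in $\mathbb{Z}_N$.

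In the main case I would set $t \equiv l\,k^{-1} \pmod N$, so that after multiplying by $k^{-1}$ the target congruence $kl' \equiv k'l$ becomes equivalent to $l' \equiv t k' \pmod N$. It then suffices to produce integers $k', l'$, not both zero, with $|k'|,|l'| \le L_0$ and $l' \equiv t k' \pmod N$. To do this I would consider the $(L_0+1)^2$ pairs $(a,b)$ with $0 \le a, b \le L_0$ together with the map sending each pair to the residue $(ta - b) \bmod N$. Because $(L_0+1)^2 > N$, the pigeonhole principle forces two distinct pairs $(a_1,b_1) \ne (a_2,b_2)$ into the same residue class. Setting $k' = a_1 - a_2$ and $l' = b_1 - b_2$ then yields $|k'|,|l'| \le L_0$ together with $t k' \equiv l' \pmod N$; multiplying back by $k$ and using $kt \equiv l \pmod N$ recovers $kl' \equiv k'l \pmod N$.

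The step requiring the most care is confirming that the representative produced is genuinely nontrivial, i.e. $(k',l') \ne (0,0)$, so that it actually names a direction rather than collapsing the argument. This I would verify by noting that if $k' = 0$ then $l' \equiv t\cdot 0 \equiv 0 \pmod N$, and since $|l'| \le L_0 < N$ this forces $l' = 0$, whence $a_1 = a_2$ and $b_1 = b_2$ contradict the distinctness of the two pairs; therefore $k' \ne 0$. Apart from this nontriviality check and the counting inequality, no deeper arithmetic of $N$ is needed beyond primality, which enters only to guarantee that $\mathbb{Z}_N$ is a field and that $k$ is invertible.
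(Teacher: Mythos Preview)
Your proof is correct. The paper does not actually supply its own argument for this lemma; it simply attributes the result to Thue and cites Shoup's textbook for a proof. What you have written is precisely the classical pigeonhole proof of Thue's lemma, and it matches what one finds in that reference: reduce to a single residue $t \equiv lk^{-1}$, count the $(L_0+1)^2 > N$ pairs $(a,b)$ with $0 \le a,b \le L_0$, collide two of them, and take differences. Your handling of the degenerate axes and of the nontriviality of $(k',l')$ is also clean, and you are right that the lemma as literally stated in the paper would be vacuous without the implicit requirement $(k',l') \ne (0,0)$, which its application in the proof of \cref{thm:2D_sq} clearly demands.
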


A proof can be found in~\cite{shoup_book_2009}. \Cref{eq:equiv_slope} provides the condition under which two DFT coefficients are dependent. \Cref{lemma3} states that we can always find a solution to \cref{eq:equiv_slope} with $k'$ and $l'$ both smaller in magnitude than $L$. These results are combined to obtain the uniqueness result in \cref{thm:2D_sq}.

\begin{theorem}
\label{thm:2D_sq}
\textcolor{red}{\rm Consider a generic binary matrix ${\tt X}$ of known dimension $N \times N$, where $N$ is prime. Let $L_0 = \lfloor \sqrt{N} \rfloor$. Then the inverse problem {\rm IP$(N,N,L,L)$} (see~{\rm \cref{def:IP}}) is uniquely solvable for any $L \ge L_0$.}
\end{theorem}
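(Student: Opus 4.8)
The plan is to prove that any two $(L_0,L_0)$-indistinguishable binary matrices must coincide; because increasing $L$ only adds known coefficients, I would reduce at once to the case $L=L_0$. Setting ${\tt Z}={\tt X}-{\tt Y}$, the task becomes the following: given an $N\times N$ integer matrix with entries in $\{-1,0,1\}$ whose DFT coefficients $\tilde{Z}_{kl}$ vanish on the box $|k|,|l|\le L_0$, conclude that ${\tt Z}=0$ (equivalently, that $\tilde{Z}_{kl}=0$ for \emph{all} $k,l$, so that ${\tt X}={\tt Y}$).

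The key step, and the one I expect to carry the most weight, is an amplification: a single vanishing coefficient inside the box forces an entire line of coefficients through the origin of frequency space to vanish. Fixing $(k,l)\neq(0,0)$ with $|k|,|l|\le L_0$, I would group the terms of $\tilde{Z}_{kl}$ by the value $j=lm+kn \pmod N$ of the linear form appearing in \cref{eq:line_mod2}, writing $\tilde{Z}_{kl}=\sum_{j=1}^N \eta_j\, e^{2\pi\cu j/N}$ with $\eta_j=\sum_{lm+kn=j\,({\rm mod}\,N)} Z_{mn}$ the signed count along the corresponding periodic line. Since $N$ is prime, irreducibility of the cyclotomic polynomial forces all $\eta_j$ to equal a common constant $c$; summing over $j$ gives $Nc=\tilde{Z}_{00}=0$, so every line sum $\eta_j$ vanishes. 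As the projection sequence $(\eta_j)$ is identically zero, so is its one-dimensional DFT, and a short computation identifies that DFT with $\tilde{Z}_{tl,tk}$ for $t=1,\dots,N-1$. Thus $\tilde{Z}_{kl}=0$ propagates to $\tilde{Z}_{tl,tk}=0$ for all $t$; that is, $\tilde{Z}$ vanishes along the entire line through the origin containing $(l,k)$.

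It then remains to tile frequency space by such lines. By \cref{lemma3}, every pair $(k,l)$ with $|k|,|l|\le N-1$ is slope-equivalent in the sense of \cref{eq:equiv_slope} to some $(k',l')$ with $|k'|,|l'|\le L_0$; hence the directions represented inside the box realize all $N+1$ slope classes of $\mathbb{P}^1(\mathbb{Z}_N)$. Applying the amplification step to one representative of each class, and noting that sending a direction to the frequency line it produces is a bijection between slope classes and lines through the origin, I would conclude that $\tilde{Z}$ vanishes on every line through the origin. Since the $N^2-1$ nonzero frequency points partition into exactly $N+1$ such lines (each with $N-1$ nonzero points), this yields $\tilde{Z}_{kl}=0$ for all $(k,l)$; together with $\tilde{Z}_{00}=0$ it gives $\tilde{Z}\equiv 0$, hence ${\tt Z}=0$ and ${\tt X}={\tt Y}$.

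The two points demanding care are the bookkeeping of the index swap (the direction $(k,l)$ produces vanishing along the frequency line through $(l,k)$, not $(k,l)$), which is harmless because slope inversion is still a bijection of $\mathbb{P}^1(\mathbb{Z}_N)$, and the verification that the degenerate directions $k=0$ and $l=0$ (the row and column projections) are genuinely included so that no slope class is missed. The essential content, however, is the amplification: it is the primality of $N$, through irreducibility of the cyclotomic polynomial, together with the vanishing popcount difference $\tilde{Z}_{00}=0$, that upgrades one zero coefficient to a full line of zeros, after which \cref{lemma3} supplies exactly enough directions to cover all of frequency space.
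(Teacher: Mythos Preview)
Your proposal is correct and follows essentially the same route as the paper. Both arguments rest on the fact that, for prime $N$, a single coefficient $\tilde Z_{kl}$ together with $\tilde Z_{00}=0$ forces all the directional line sums $\eta_j$ to vanish (by irreducibility of the cyclotomic polynomial), which in turn kills $\tilde Z_{k'l'}$ for every $(k',l')$ in the same slope class; \cref{lemma3} then guarantees that the box $|k|,|l|\le L_0$ already contains a representative of each of the $N+1$ classes. The paper simply packages your ``amplification'' step into the discussion preceding the theorem (the statement that $(k,l)\sim(k',l')$ implies the two coefficients carry identical information), so its formal proof is only three lines, whereas you unroll the same mechanism explicitly via the difference matrix ${\tt Z}$.
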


\begin{proof}
It is sufficient to prove the theorem for $L=L_0$. By the inverse DFT in \cref{DFT_inv_2D}, knowledge of all DFT coefficients uniquely determines any binary matrix. Suppose that $\tilde{X}_{kl}$ is unknown for some $k$ and $l$ such that $|k|$ or $|l|$ is greater than $L_0$. By \cref{lemma3}, there exists a $k'$ and $l'$ satisfying $|k'|,|l'|\le L_0$ and $(k',l')\sim(k,l)$. By \cref{eq:equiv_slope}, $\tilde{X}_{kl}$ and  $\tilde{X}_{k'l'}$ are dependent and provide identical information. As $\tilde{X}_{k'l'}$ is within the assumed pass band, ${\tt X}$ is uniquely determined.
\end{proof}

Approaching this setup geometrically, one can represent the entries of the $N \times N$ matrix as a $N \times N$ grid of points, and consider all the lines that (periodically) connect these points. This is an example of a finite affine plane of order $N$~\cite{hartshorne_book_2013}. It is known that each line in such a geometry contains $N$ points, and each point is on $N+1$ lines (with $N$ parallel classes for each line for a total of $N^2+N$ lines). As each DFT coefficient provides the popcount along $N$ lines in a parallel class, there can, in fact, only be $N+1$ independent DFT coefficients (in addition to the global popcount $\tilde{X}_{00}$). 

This observation implies that the condition provided by \cref{thm:2D_sq} is not a necessary one; it is sufficient but necessary to know {\em all} DFT coefficients up to order $L_0$ for unique recovery. However, the theorem states that at least one of the required $N+1$ coefficients (in addition to $\tilde{X}_{00}$) is of the order $L_0$. For example, for $N=17$, we have $L_0 = \lfloor\sqrt{17}\rfloor = 4$, but out of the $N+1 = 18$ coefficients needed (in addition to the global popcount) to guarantee recovery, $\tilde{X}_{1,4}$ and $\tilde{X}_{4,1}$ are the only independent coefficients of 4th order. All other 4th order DFT coefficients are equivalent to some coefficient of lower order by \cref{eq:equiv_slope}. One can see that, in general, uniqueness requires knowledge of at least one coefficient of the order $L_0$. This is so because $\tilde{X}_{L_0 1}$ is independent from all DFT coefficients of lower order. Indeed, there are no solutions to the equation $L_0 l' = k' \pmod N$ with $|k'|,|l'| < L_0$. 

\subsection{Square matrices of non-prime dimension}
\label{sec:uni.sqn}

When the dimension of a square binary matrix is not prime, the geometric interpretation of the coefficients is not as apparent. Consider a binary matrix of the size $N \times N$ where $N=p^\alpha$ with $\alpha > 1$. The DFT coefficients can be expressed in this case as
\begin{align}
\label{eq:case1}
\tilde{X}_{kl} = \sum_{m,n=1}^N X_{mn} \, e^{2\pi \cu \left( mk + nl \right) / N}
= \sum_{j=1}^N \ \eta_j \ e^{2\pi \cu \left( j / N \right) }\ , \ \ \mbox{where} \ \eta_j = \hspace{-8mm}
\sum_{\substack{m,n=1\\mk + nl = j \ ({\rm mod}\ N)}}^N \hspace{-8mm} X_{mn} \ .
\end{align}
The last expression partitions the entries of ${\tt X}$ according to $mk+nl = j \pmod{N}$ for each integer $j$ in the range $1\le j \le N$. We no longer refer to the entries satisfying  $mk + nl= j \pmod{N}$ as a line because this fails the usual geometric definition of two lines intersecting at most once. For example, for $\alpha=2$, the partition $k=0$, $l=j=1$ and the partition $k=p$, $l=j=1$ intersect at $(m,n)=(\mu p, 1)$ for all $0\le \mu \le p-1$. Moreover, as $N$ is not prime, the DFT coefficients no longer uniquely determine the sums along these partitions. In particular, the DFT coefficient $\tilde{X}_{10}$ no longer uniquely determines the row sums of ${\tt X}$. By \cref{lem:lenstra1}, it is possible that $\tilde{X}_{10} = 0$ as long as, for all $m$, the row sums $r_m$ satisfy $r_m = r_{m + \mu p^{\alpha-1}}$ for $0 \le \mu \le p-1$. It is straightforward to see that $\tilde{X}_{k0}$ yields identical information, as long as $k$ is not a multiple of $p$. When $k$ is a multiple of $p$, let $\beta = \log_p(\gcd(k, N))$. Then, for $k'=k/\gcd(k, N)$, we have
\begin{align}
\label{eq:tildeX_k0}
\tilde{X}_{k0} = \sum_{m,n=1}^{N} X_{mn} e^{2\pi \cu \left( m k / N \right) } = \sum_{m,n=1}^{N} X_{mn} e^{2\pi \cu \left( m k' / p^{\alpha-\beta} \right) } \ .
\end{align}
The second sum involves roots of unity of the order $p^{\alpha-\beta}$, each root appearing $p^{\alpha+\beta}$ times. Intuitively, this suggests that $\tilde{X}_{k_1 0}$ and $\tilde{X}_{k_2 0}$ contain different information if $\gcd(k_1, N) \neq \gcd(k_2, N)$.  For $k < N$, we have the bound $\gcd(k, N) \le p^{\alpha-1}$. This suggests that $X_{p^{\alpha-1},0}$ contains new information as compared to all the lower-order coefficients and motivates the uniqueness result in \cref{thm:2D_sq_alpha}.

\begin{theorem}
\label{thm:2D_sq_alpha}
\textcolor{red}{\rm Consider a generic binary matrix ${\tt X}$ of known dimension $N \times N$ where $N = p^\alpha$, $p$ is prime and $\alpha>1$ an integer. Define $L_0 = p^{\alpha-1}$. Then the inverse problem {\rm IP$(N,N,L,L)$} (see {\rm \cref{def:IP}}), is uniquely solvable for any $L\ge L_0$.}
\end{theorem}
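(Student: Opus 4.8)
My plan is to adapt the "equivalence of DFT coefficients" strategy used for the prime square case (\Cref{thm:2D_sq}) to the prime-power setting, but now controlling dependence via the greatest common divisor with $N$ rather than via the slope equivalence \cref{eq:equiv_slope}. As in the proof of \Cref{thm:2D_sq}, it suffices to show that any DFT coefficient $\tilde{X}_{kl}$ with $|k|>L_0$ or $|l|>L_0$ can be recovered from (i.e., is determined by) the coefficients inside the pass band $|k'|,|l'|\le L_0=p^{\alpha-1}$. The key algebraic fact I would isolate is that, since $N=p^\alpha$ has a single prime divisor, \Cref{lem:lenstra1} applies: writing $M=p$, a vanishing cyclotomic integer of order $N$ decomposes into $N/M$ independent vanishing sums of order $p$, and this is what lets me say precisely what information each $\tilde{X}_{kl}$ carries.

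First I would make precise the partition picture from \cref{eq:case1}: knowing $\tilde{X}_{kl}$ is \emph{not} equivalent to knowing all the partition sums $\eta_j$ (the roots of unity repeat, so irreducibility fails), but by \Cref{lem:lenstra1} it \emph{is} equivalent to knowing the aggregated sums $\sum_{j \equiv j_0 \,(\mathrm{mod}\, p)} \eta_j$ for each residue $j_0$ modulo $p$ — the sums grouped according to the order-$p$ roots that actually survive. So each $\tilde{X}_{kl}$ delivers exactly $p$ pieces of real information: the popcounts of ${\tt X}$ over the $p$ classes $\{(m,n): mk+nl \equiv j_0 \ (\mathrm{mod}\ p)\}$. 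The second step is to show that the class structure of $(k,l)$ depends only on $(k,l)$ modulo $p$ together with the $p$-adic valuations: I would argue that two coefficients $\tilde{X}_{kl}$ and $\tilde{X}_{k'l'}$ carry the same information whenever $(k,l)$ and $(k',l')$ define the same partition of indices into mod-$p$ classes, and then reduce the general index to a representative inside the band.

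The heart of the argument is a reduction lemma analogous to \Cref{lemma3}: for every $(k,l)$ I must produce $(k',l')$ with $|k'|,|l'|\le p^{\alpha-1}$ such that $\tilde{X}_{k'l'}$ determines $\tilde{X}_{kl}$. When $\gcd(k,l,N)=p^\beta$ with $\beta\ge 1$, I would factor out $p^\beta$ as in \cref{eq:tildeX_k0}, reducing to a coefficient of effective order $p^{\alpha-\beta}$; the surviving linear form $mk+nl$ then only needs to be matched modulo $p^{\alpha-\beta}$, and since one of $k,l$ is a unit at the relevant level I can solve $kl'\equiv k'l$ (in the appropriate modulus) for a representative with both entries bounded by $p^{\alpha-\beta}\le p^{\alpha-1}=L_0$. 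The case $\beta=0$ (at least one of $k,l$ coprime to $p$) is the binding one, where the bound $L_0=p^{\alpha-1}$ must be shown tight, mirroring the role of $\tilde{X}_{L_0,1}$ in the prime case: the coefficient $\tilde{X}_{p^{\alpha-1},0}$ extracts genuinely new information (the coarsest nontrivial grouping of row sums) unavailable from anything of lower order, which is exactly what motivates the threshold.

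I expect the main obstacle to be the bookkeeping in the reduction lemma when both $k$ and $l$ are divisible by $p$ but to different valuations — here the naive slope-matching of \cref{eq:equiv_slope} breaks down because $\mathbb{Z}_N$ is not a field and the "slope" $-l/k$ need not exist. The careful point is to track, for the pair $(k,l)$, the valuation $\beta=v_p(\gcd(k,l,N))$ and to reduce modulo $p^{\alpha-\beta}$ rather than modulo $N$, invoking \Cref{lem:lenstra1} at the correct level so that the claim "these two coefficients give the same $p$ aggregated popcounts" is actually justified and not merely plausible. Once the reduction is stated at the right modulus, the remaining verification that a bounded representative exists is an application of \Cref{lemma3} to the reduced modulus $p^{\alpha-\beta}$ (or its prime-power analogue), and the theorem follows exactly as in \Cref{thm:2D_sq}: every out-of-band coefficient is redundant, so the in-band data determine ${\tt X}$ uniquely.
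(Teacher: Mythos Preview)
Your overall strategy matches the paper's: show that every out-of-band coefficient $\tilde{X}_{kl}$ is determined by some in-band $\tilde{X}_{k'l'}$, via a dependence relation established through \Cref{lem:lenstra1}, together with a Thue-type reduction (\Cref{lemma3}) to find small representatives. The paper packages the dependence step as \Cref{lem:4} (if $k'l\equiv kl'\pmod N$ with at least one of $k,l$ coprime to $p$, then $\tilde{X}_{k'l'}=0\Rightarrow\tilde{X}_{kl}=0$) and then runs a three-way case split on how many of $k,l$ are coprime to $p$; your split by $\beta=v_p(\gcd(k,l,N))$ is a reorganization of the same cases.

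However, your second paragraph contains a genuine error that, if used as the dependence criterion, breaks the argument. You assert that by \Cref{lem:lenstra1}, $\tilde{X}_{kl}$ is equivalent to the $p$ aggregated popcounts over the classes $\{(m,n):mk+nl\equiv j_0\pmod p\}$, so that ``each $\tilde{X}_{kl}$ delivers exactly $p$ pieces of real information.'' This misreads the lemma. With $N=p^\alpha$ and $M=p$, writing $\zeta_p=\zeta_N^{p^{\alpha-1}}$, \Cref{lem:lenstra1} says $\sum_{j=1}^N\eta_j\zeta_N^j=0$ iff, for each residue $\mu\in\{1,\dots,p^{\alpha-1}\}$ of $j$ modulo $p^{\alpha-1}$, the $p$ values $\eta_\mu,\eta_{\mu+p^{\alpha-1}},\dots,\eta_{\mu+(p-1)p^{\alpha-1}}$ coincide (this is exactly \cref{eq:c_alpha} in the paper). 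So $\tilde{X}_{kl}$ fixes the $\eta_j$ up to $p^{\alpha-1}$ free constants, not $p$. Your resulting equivalence test --- ``same partition into mod-$p$ classes'' --- is therefore too coarse: for $\alpha\ge 2$ the pairs $(1,0)$ and $(1,p^{\alpha-1})$ agree modulo $p$ but $\tilde{X}_{1,0}$ and $\tilde{X}_{1,p^{\alpha-1}}$ do \emph{not} determine one another (the first depends only on row sums, the second does not). Were your criterion correct, a band limit of order $p$ would already suffice, contradicting the tightness examples at the end of \Cref{sec:uni.sqn}. The correct dependence condition is the one in \Cref{lem:4}, namely $k'l\equiv kl'\pmod N$ (not $\pmod p$) with at least one index a unit; your later paragraphs, which work modulo $p^{\alpha-\beta}$, are headed in the right direction and should be made consistent with this, discarding the mod-$p$ characterization entirely.
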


Before proceeding, we state and prove the following useful lemma: 

\begin{lemma}
\label{lem:4}
\textcolor{blue}{\rm Under the conditions of \cref{thm:2D_sq_alpha}, let $k'l = kl' \pmod {N}$, and suppose that at least one of $k$ and $l$ is relatively prime with $p$. Then $X_{k'l'}=0$ implies that $X_{kl}=0$.}
\end{lemma}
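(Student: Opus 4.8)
The plan is to reduce the vanishing of each DFT coefficient to a combinatorial condition on its partition sums, and then to transport that condition from $(k',l')$ to $(k,l)$ using the coprimality hypothesis. As in \cref{eq:case1}, write $\tilde X_{ab}=\sum_{j=1}^{N}\eta^{(a,b)}_j\,(\zeta_N)^j$, where $\zeta_N=e^{2\pi\cu/N}$ and $\eta^{(a,b)}_j=\sum_{ma+nb\equiv j\,({\rm mod}\,N)}X_{mn}$ collects the entries on the partition class $ma+nb\equiv j\pmod N$. Since $N=p^\alpha$, the product of the distinct primes dividing $N$ is $M=p$, so \cref{lem:lenstra1} applies: grouping the $N$-th roots of unity as $(\zeta_p)^{i}(\zeta_N)^{j_0}$ with $1\le i\le p$ and $1\le j_0\le p^{\alpha-1}$, the sum vanishes if and only if, for each fixed $j_0$, the coefficients attached to the $p$ roots $(\zeta_p)^i(\zeta_N)^{j_0}$ form a vanishing cyclotomic integer of prime order $p$, hence are all equal. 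Translating back, $\tilde X_{ab}=0$ if and only if the sequence $\eta^{(a,b)}_j$ is invariant under the shift $j\mapsto j+p^{\alpha-1}$, i.e. it is constant on each coset of the order-$p$ subgroup $H=\langle p^{\alpha-1}\rangle\subset\mathbb Z_N$. This characterization is the engine of the proof and plays the role that irreducibility played in the prime case.

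With this in hand I would use the coprimality assumption to relate the two partitions. Suppose, without loss of generality, that $\gcd(k,p)=1$, so $k$ is a unit in $\mathbb Z_N$; the hypothesis $k'l\equiv kl'\pmod N$ then gives $l'\equiv k^{-1}k'\,l$ and $k'\equiv k^{-1}k'\,k$, so that $(k',l')\equiv t\,(k,l)\pmod N$ with $t=k^{-1}k'$. Consequently $mk'+nl'\equiv t(mk+nl)\pmod N$, and when $t$ is a unit this yields the exact reindexing $\eta^{(k',l')}_{tj}=\eta^{(k,l)}_{j}$; that is, the two partitions coincide up to the relabeling $j\mapsto tj$ of their class indices. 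Assuming $\tilde X_{k'l'}=0$, the sequence $\eta^{(k',l')}$ is $H$-periodic by the previous paragraph; pulling this back through $j\mapsto tj$ shows $\eta^{(k,l)}_i=\eta^{(k,l)}_{i+t^{-1}p^{\alpha-1}}$ for all $i$. Because $t^{-1}$ is a unit, $t^{-1}p^{\alpha-1}$ generates the same order-$p$ subgroup $H$ as $p^{\alpha-1}$ does, so invariance under the shift by $t^{-1}p^{\alpha-1}$ is equivalent to $H$-periodicity of $\eta^{(k,l)}$. Applying the characterization in the reverse direction gives $\tilde X_{kl}=0$, as desired.

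The step I expect to be the main obstacle is precisely the claim that the multiplier $t=k^{-1}k'$ is a unit modulo $p$, for this is what makes the relabeling $j\mapsto tj$ a bijection of partition classes and lets the periodicity transfer. If $p\mid t$, then both $k'$ and $l'$ become divisible by $p$, the coefficient $\tilde X_{k'l'}$ collapses onto roots of strictly lower order as in \cref{eq:tildeX_k0}, and the partition $mk'+nl'\equiv j\pmod N$ is genuinely coarser than $mk+nl\equiv j\pmod N$; in that regime the reindexing is many-to-one and the implication can fail, mirroring the paper's own observation that $\tilde X_{k_10}$ and $\tilde X_{k_20}$ carry different information when $\gcd(k_1,N)\neq\gcd(k_2,N)$. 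I would therefore discharge this obstacle using the coprimality hypothesis on $(k,l)$ together with the fact that $(k',l')$ is taken in the same primitive direction, so that one of $k',l'$ is again coprime to $p$ and $t$ is forced to be a unit, keeping the two partitions in bijection. The remaining checks, that $H=\langle p^{\alpha-1}\rangle=\langle t^{-1}p^{\alpha-1}\rangle$ and that a unit permutes the cosets of $H$, are routine facts about $\mathbb Z_N$.
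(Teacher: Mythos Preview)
Your approach is exactly the paper's: use \cref{lem:lenstra1} to characterize $\tilde X_{ab}=0$ as $H$-periodicity of the partition sums $\eta^{(a,b)}_j$ with $H=\langle p^{\alpha-1}\rangle\subset\mathbb Z_N$, and then transport this from $(k',l')$ to $(k,l)$ via the scalar $t$ linking the two linear forms. You are also right that the crux is whether $t$ is a unit: the relation you write, $\eta^{(k,l)}_j=\eta^{(k',l')}_{tj}$, rests on an equality of index sets $\{(m,n):mk+nl\equiv j\}=\{(m,n):mk'+nl'\equiv tj\}$, but the algebra $mk'+nl'=t(mk+nl)$ only gives the inclusion $\subseteq$; equality requires multiplication by $t$ to be a bijection on $\mathbb Z_N$.

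Your proposed resolution, however, does not go through. Primitivity of $(k',l')$ is \emph{not} a consequence of the stated hypotheses: with $N=9$, $(k,l)=(1,0)$, $(k',l')=(3,0)$ one has $k'l\equiv kl'\equiv 0$ and $\gcd(k,3)=1$, yet $p$ divides both $k'$ and $l'$. In this example $\tilde X_{30}=0$ only forces the three aggregated row sums $R_j=\sum_{m\equiv j\,(3)}r_m$ to coincide, which does not force $r_1=r_4=r_7$, so $\tilde X_{10}$ need not vanish; thus the lemma as literally stated is false, and no argument can close the gap without strengthening the hypotheses. The paper's own displayed equality of sums makes the same tacit assumption (that the multiplier $l^{-1}l'$ is a unit). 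In every application inside the proof of \cref{thm:2D_sq_alpha}, the pair $(k',l')$ is chosen so that one of $k',l'$ is coprime to $p$, and under that extra hypothesis both your argument and the paper's are correct. The clean fix is simply to add that hypothesis to the lemma; then $t$ is a unit and your reindexing is a bijection.
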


\begin{proof}
Suppose that $X_{k'l'}=0$. Without loss of generality, assume that $\gcd(l,p)=1$. To employ \cref{lem:lenstra1} we first collect all powers of the $N$-th primitive root of unity $\zeta_N$. We rewrite this coefficient as 
\begin{align}
\label{eq:case1_ins}
0 = \tilde{X}_{k'l'} = \sum_{m,n=1}^N X_{mn} \, (\zeta_N)^{m k' + nl'}
  = \sum_{\mu=1}^N \left[
\sum_{\substack{m,n=1\\ mk' + nl' = \mu \ ( {\rm mod} \ N ) }}^N \hspace{-0.8cm} X_{mn} \right] \, (\zeta_N)^{\mu} \ .
\end{align}
By \cref{lem:lenstra1}, this implies that, for $1 \le \mu \le p^{\alpha-1}$ and for all $0 \le \nu \le p-1$,
\begin{align}
\label{eq:c_alpha}
\sum_{\substack{m,n =1 \\ mk' + nl' = \mu + \nu p^{\alpha-1} \ ( {\rm mod} \ N)}}^N \hspace{-1.5cm} X_{mn} = c_\mu
\end{align}
with some integer constant $c_\mu$. We need to prove that an identical expression holds for $\tilde{X}_{kl}$ for all $\mu$ and $\nu$ and a different set of constants,
\begin{align}
\label{eq:d_alpha}
\sum_{\substack{m,n=1\\mk + nl = \mu + \nu p^{\alpha-1} \ ( {\rm mod}\ N)}}^N \hspace{-1.5cm} X_{mn} = d_\mu \ .
\end{align}
For fixed $\mu$ and $\nu$, consider the indexes of terms summed in \cref{eq:d_alpha}. Using the fact that $k'l=kl' \pmod N$ and that $l$ has a multiplicative inverse, we make the following algebraic manipulations:
\begin{alignat*}{4}
&m k         &+ n l  &=         &\mu \ + \ &\nu            &p^{\alpha-1} &\pmod N  \\
&m kl^{-1}l' &+ n l' &= l^{-1}l'&\mu \ + \ &\nu ( l^{-1}l')&p^{\alpha-1} &\pmod N  \\
&m k'        &+ n l' &= l^{-1}l'&\mu \ + \ &\nu ( l^{-1}l')&p^{\alpha-1} &\pmod N  \ .
\end{alignat*}
Thus, letting $\mu'= l^{-1}l'\mu \pmod N$  and $\nu' = \nu (l^{-1}l') \pmod N$, we have 
\begin{align*}
\sum_{\substack{m,n=1\\mk + nl  = \mu  + \nu p^{\alpha-1} \ ( {\rm mod}\ N)}}^N \hspace{-12mm} X_{mn}  \hspace{10mm} = \sum_{\substack{m,n=1\\mk'+ nl' = \mu' + \nu'p^{\alpha-1} \ ( {\rm mod}\ N)}}^N \hspace{-12mm} X_{mn} \hspace{5mm} = c_{\mu'} \ ,
\end{align*}
where this last equality holds from \cref{eq:c_alpha}. Thus $d_\mu  = c_{\mu'}$ in \cref{eq:d_alpha}, which implies that $\tilde{X}_{kl}=0$. 
\end{proof}

\cref{lem:4} implies that $\tilde{X}_{k'l'}$ and $\tilde{X}_{kl}$ are dependent if $k'l=kl' \pmod {N}$. What remains to show is that this condition is satisfied for all DFT coefficients of order larger than $p^{\alpha-1}$. We are now ready to prove \cref{thm:2D_sq_alpha}.

\begin{proof}
	
\Cref{thm:2D_sq_alpha} will be proved by showing that, for any DFT coefficient $\tilde{X}_{kl}$ with either $|k|$ or $|l|$ greater than $L_0 = p^{\alpha-1}$, there exists a DFT coefficient $\tilde{X}_{k'l'}$ with $|k'|,|l'| \le L_0$ that already contains dependent information. We consider three separate cases: both $k$ and $l$ relatively prime with $N$, only one of $k$ and $l$ relatively prime with $N$, and neither $k$ nor $l$ relatively prime with $N$.
	
\paragraph{1) Case $\gcd(k,p) = \gcd(l,p) = 1$}
By a small extension of \cref{lemma3}, we can find $k'$ and $l'$ that are relatively prime with $p$, $|k'|,|l'| \le \lfloor\sqrt{p^\alpha}\rfloor< L_0$, and $(k,l) \sim (k',l')$, with the equivalence relation denoted by $\sim$ defined in (19). We can now apply \cref{lem:4} to these pairs of integers to conclude that $\tilde{X}_{k'l'}$ and $\tilde{X}_{kl}$ are dependent.
	
\paragraph{2) Case $\gcd(k,p)\neq 1, \gcd(l,p)=1$}
Without loss of generality, we will assume that $l$ is still relatively prime with $p$. Let $k'=\gcd(k,N)\le L_0$. With this choice of $k'$, we can find an $l'$ such that $|l'|\le L_0$ and $(k,l)\sim(k',l')$. As $k'l$ can take one of $N/k'$ values in $\mathbb{Z}_N$, and $k$ is an additive generator of these $N/k'$ values, there is some $l' \le N/k' \le L_0$ such that $kl'=k'l \pmod N$.  As $k'$ was chosen to be the greatest common divisor of $k$ and $N$, this choice of $l'$ must be relatively prime with $N$. Thus, \cref{lem:4} applies, implying that $\tilde{X}_{k'l'}$ and $\tilde{X}_{kl}$ are dependent.
	
\paragraph{3) Case $\gcd(k,p) \neq 1, \gcd(l,p) \neq 1$}
In this case, let $k'=\gcd(k,N)$ and $l' = \gcd(l,N)$, and without loss of generality, let $l' \le k'$. This case can be reduced to Case 1. Setting $\tilde{X}_{k'l'} = 0$, we have a vanishing sum of roots of unity of order $N/l'$
\begin{align*}
0= \tilde{X}_{k'l'} = \sum_{m,n=1}^N X_{mn} (\zeta_N)^{k'm+nl'} 
&= \sum_{m,n=1}^N X_{mn} (\zeta_{N/l'})^{(k'/l')m+n} \\
&= \sum_{\mu=1}^{N/l'} \left[\sum_{\substack{m,n=1\\(k'/l')m+n = \mu \pmod{N/l'}}}^N \hspace{-12mm} X_{mn} \hspace{4mm} \right] (\zeta_{N/l'})^{\mu} \ .
\end{align*}
This last equation is exactly the same as \cref{eq:case1_ins} in \cref{lem:4} with the substitutions $N\gets N/l'$, $k'\gets k'/l'$, and $l'\gets 1$. The result of \cref{lem:4} can now be applied, completing the proof.  
\end{proof}

The result of \cref{thm:2D_sq_alpha} is tight in the sense that there exist matrices that cannot be uniquely recovered with the data bandwidth $L < L_0=p^{\alpha-1}$. Unfortunately, this implies that we have no universal super-resolution (as defined in this paper) for square matrices of the size $N=2^\alpha$. By the even version of $\cref{eq:oddM}$, all DFT coefficients are in the range $[-2^{\alpha-1}+1, 2^{\alpha-1}]$. With $L_0 = 2^{\alpha-1}$, this range is equivalent to $[-L_0+1,L_0]$. Thus, the condition $L=L_0$ is equivalent to the requirement that the complete set of DFT coefficients be known.  As an example, consider the checkerboard matrices defined entry-wise by
\begin{align*}
X_{nm} = \begin{cases} 
0 & n+m = 1 \pmod 2 \\ 
1 & n+m = 0 \pmod 2
\end{cases} \ \ , \ \  
Y_{nm} = \begin{cases} 
0 & n+m = 0 \pmod 2 \\ 
1 & n+m = 1 \pmod 2
\end{cases} \ ,
\end{align*}
where $1 \le n,m \le 2^\alpha$. The corresponding DFT coefficients are given by 
\begin{align}
\label{eq:checker_DFT}
\tilde{X}_{kl} = 
\begin{cases} 
2^{\alpha-1} & k=l=0 \\ 
2^{\alpha-1} & k=l=2^{\alpha-1} \\ 
0 & \mbox{otherwise}
\end{cases} \ \ , \ \ 
\tilde{Y}_{kl} = 
\begin{cases} 
2^{\alpha-1} & k=l=0 \\ 
-2^{\alpha-1} & k=l=2^{\alpha-1} \\ 
0            & \mbox{otherwise}
\end{cases}\ .
\end{align}
These coefficient values can be readily obtained by letting ${\tt X} = \frac{1}{2}({\tt J} + {\tt A})$ where ${\tt J}$ is the matrix of all ones and ${\tt A}$ is the matrix with the entries $A_{nm} = (-1)^{n+m}$. The only nonzero DFT coefficient of ${\tt J}$ is $\tilde{J}_{00} = 2^\alpha$. Similarly, we can represent the entries of ${\tt A}$ as $A_{nm} = e^{\pi \cu (n + m)} = e^{2\pi \cu (m + n)(N/2)/N}$, which shifts the nonzero entry to the position $(N/2, N/2) = (2^{\alpha-1}, 2^{\alpha-1})$. Similar logic applied to ${\tt Y} = \frac{1}{2}({\tt J} - {\tt A})$ yields the expression given in \cref{eq:checker_DFT}. These two matrices agree on all coefficients except one that requires $L=L+0=2^{\alpha-1}$. 

Similarly to the previous checkerboard example, we can show that the square matrices of the size $N=p^\alpha$ (with $p$ being a prime greater than $2$) defined as
\begin{align*}
X_{nm} = 
\begin{cases} 1 & n+m = 1 \pmod p \\ 
0               & \mbox{otherwise} 
\end{cases} \ \ , \ \ 
Y_{nm} = 
\begin{cases} 
1 & n+m = 0 \pmod p \\ 
0 & \mbox{otherwise}
\end{cases}
\end{align*}
are $(p^{\alpha-1})$-indistinguishable, implying that the band width $L=p^{\alpha-1}$ is required for unique inversion. 

\section{Inversion algorithms}
\label{sec:inv}

We now discuss the algorithms to recover binary matrices for each case considered: (i) rectangular matrices with dimensions $N_1 \times N_2$ where $N_1$ and $N_2$ are distinct primes, (ii) square matrices of dimension $N \times N$ where $N$ is prime, and square matrices with $N$ of the form $N=p^\alpha$, where $p$ is prime and $\alpha>1$ an integer. For each case, we assume access to a large enough bandwidth of DFT coefficients to guarantee uniqueness, as determined by the previous section. 

\subsection{General strategy}
\label{sec:inv.gen}

Let, as above, the total number of elements in an $N_1 \times N_2$ matrix be denoted as $T = N_1 N_2$. Even under the conditions when each matrix of given dimension is, theoretically, uniquely determined by the data, finding the inverse solution by exhaustive search requires testing ${T \choose S}$ possibilities, where $S$ is the global popcount. Under the condition when $S\sim T / 2$, this strategy quickly becomes computationally prohibitive. However, inspired by the theoretical derivations shown above, we can break the inverse problem into more manageable steps and significantly increase the computational efficiency. Before developing algorithms for each case considered, we make an observation on the general form of these subproblems.

Theory suggests that the DFT coefficients often contain information equivalent to how many ones are present in each periodic line. For example, when $N_2$ is prime, $\tilde{X}_{01}$ (in conjunction with $S = \tilde{X}_{00}$) is equivalent to knowing how many ones are present in each column of ${\tt X}$. We thus consider the related combinatorial problem of placing $S$ ones in $N_2$ boxes, where we can place no more than $N_1$ ones in each box. By the inclusion-exclusion principle, one can compute the total number of possibilities as 
\begin{align}
\label{2D_comb1}
\sum_{n\ge 0} (-1)^n {N_1 \choose n}{S-nN_2+N_1-(n+1) \choose N_1-1} \ .
\end{align}
This formula gives the complexity of finding by exhaustive search the column-wise sums of ${\tt X}$. If this problem can be solved, the search space for the unique binary image has been significantly reduced to only those matrices with the correct number of ones in each column. We need to find among those the matrix that matches any remaining known but yet unused DFT coefficients. Refer to the correct column sum values as $c_n$ for $1 \le n \le N_2$. The unique binary image that matches the four given DFT coefficients is now within a set of size
\begin{align}
\label{2D_comb2}
\prod_{n=1}^{N_2} {N_1 \choose c_n} \ .
\end{align}

As a concrete example, consider the case $N_1=7$, $N_2=11$ and $S=38$. The number of distinct binary matrices with these parameters is $1.36 \times10^{22}$. The problem of determining the $c_n$ values is substantially smaller and is of size $1,528,688$ according to \cref{2D_comb1}. With only the $c_n$ values known, the overall search space has been reduced to an upper bound of $35^{11} \approx 9.65 \times 10^{16}$ by \cref{2D_comb2}. As $N_1$ is also prime here, one could repeat this process to further reduce the search space size by similarly solving for the row-wise popcounts $r_m$ -- which has a smaller individual problem size of $443,658,688$. The ensuing algorithms make use of these ideas to break down larger problems into more manageable subproblems. However, we still need methods that are more efficient than exhaustive search to solve these subproblems.

\subsection{Integer linear programming (ILP) and lattices}
\label{sec:inv.ilp}

Finding an $N_1 \times N_2$ binary matrix ${\tt X}$ that agrees with all available DFT coefficients can be phrased as an integer linear programming (ILP) problem of the form 
\begin{align}
\label{eq:intlin}
{\tt A}{\tt x} = {\tt b} \ , \ \ \   x_i \in\{0,1\} \ . 
\end{align}
In this formulation, ${\tt x}$ is a binary vector of length $N_1N_2$, which corresponds to stacking the columns of ${\tt X}$. The matrix ${\tt A}$ contains the relevant Fourier matrix entries, with ${\tt b}$ containing the available DFT coefficients. In line with \cref{DFT_dir}, we can express these entries using multi-indices of the form 
\begin{align*}
A_{(k,l),(m,n)} = e^{2\pi \cu \left( mk / N_1 + nl / N_2 \right)} \ , \ \  b_{(k,l)} = \tilde{X}_{kl} \ ,
\end{align*}
where the multi-index $(m,n)$ varies over $1 \le m \le N_1$ and $1\le n \le N_2$, and $(k,l)$ varies over the indexes corresponding to the available DFT coefficients. Note that, in an actual implementation, the entries of ${\tt A}$ and ${\tt b}$ are split into real and imaginary parts, which forces the entries of ${\tt x}$ to be real. Thus, if $M$ DFT coefficients are known in addition to $\tilde{X}_{00}$, then ${\tt A}$ is a $(2M+1) \times N_1N_2$ matrix, where we have taken into account that the row corresponding to $\tilde{X}_{00}$ has no imaginary part. For simplicity, we refer to ${\tt A}$ and ${\tt b}$ as having $M+1$ rows with complex entries. Additionally, no redundant coefficients (which are known to be conjugates of each other) are needed in an implementation.  For larger problems, ${\tt A}$ can be efficiently applied by fast Fourier transform techniques.  

Solving \cref{eq:intlin} is a known NP-hard problem. When using ILP techniques, as there is a unique solution, but no objective function to minimize, branch and bound methods do not offer significant improvement over exhaustive search. By defining an arbitrary objective function to minimize, the branch and bound may converge faster or slower, though it is typically difficult to tell {\em a priori} which is the case~\cite{aardal_2000_1}. Incorporating cutting planes and other preprocessing steps, however, can restrict the size of the search space~\cite{schrijver_1980_1, marchand_2002_1}. Without an objective function, ILP is reliant on these preprocessing steps to outperform exhaustive search.  As solving \eqref{eq:intlin} is NP-hard, the overall runtime is dominated by the size of the search space, as opposed to any cost of applying the matrix ${\tt A}$. 

An alternate approach to ILP is to use lattice basis reduction techniques. These techniques aim to reduce a given basis to short, nearly orthogonal vectors, with an end goal of facilitating calculations over the integers. We briefly summarize the celebrated  Lenstra-Lenstra-Lovasz (LLL) algorithm~\cite{lenstra_1982_1} for lattice basis reduction, which has many applications in mathematics and cryptography~\cite{hastad_1989_1}.  

Consider a linearly independent set of vectors $\mathbf{B}=\{{\tt b}_1,{\tt b}_2,\dots,{\tt b}_n\}$ in $\mathbb{R}^m$, where $n\le m$. The integer lattice $\mathcal{L}$ with this basis is the set of all linear combinations of the ${\tt b}_j$ with integer coefficients
\begin{align*}
\mathcal{L} = \{a_1{\tt b}_1+\cdots + a_n{\tt b}_n\ :\ a_j \in \mathbb{Z}\} \ .
\end{align*}
The LLL algorithm takes this basis of the lattice, $\mathbf{B}$, and returns a new basis $\mathbf{B^*}$, which is generally comprised of short, nearly orthogonal vectors. This basis $\mathbf{B}^*$ is called LLL-reduced, and is obtained through a Gram-Schmidt-like process, modified to ensure that the basis vectors stay in the lattice and to prioritize short vectors. Most importantly for our purposes, the first vector ${\tt b}^*_1$ in $\mathbf{B}^*$ will be the shortest in the new basis. It will not necessarily be the absolute shortest vector in the lattice~\cite{nguyen_2004_1}, but the LLL algorithm returns an approximately shortest vector in polynomial (hopefully, reasonable) time.

To see how we can use lattice reduction to solve \cref{eq:intlin} with $M$ known DFT coefficients, we first construct the $(T+M+1)\times(T+1)$ matrix (as before, $T=N_1 N_2$) with 4 blocks defined as
\begin{align}
\label{eq:LLL}
{\tt B} =   \left(\begin{array}{@{}c|c@{}}
{\tt I} & {\tt O} \\
\hline
\beta{\tt A} & - \beta {\tt b} 
\end{array}\right) \ .
\end{align}
In this $2\times2$ block matrix form, ${\tt A}$ and ${\tt b}$ are defined as in \cref{eq:intlin}, and ${\tt I}$ and ${\tt O}$ are the identity matrix and zero vector of the length $T$. The constant $\beta$ that appears in the lower two blocks is assumed to be large. Again, in an actual implementation, the ${\tt A}$ and ${\tt b}$ blocks would have $2M+1$ rows to account for real and imaginary parts.  

The LLL algorithm can now be performed on ${\tt B}$, treating the columns of the matrix as the lattice basis elements of length $T+M+1$. The shortest vector in the resulting LLL-reduced basis, ${\tt b}^*_1$, must necessarily be a linear combination of the original basis vectors. Letting ${\tt x} = [ a_1, a_2, \dots, a_T]$ be an integer vector, any vector in the lattice $\mathcal{L}$ is of the form
\begin{align*}
{\tt b}_1^* = [a_1, a_2, \dots, a_T | \beta({\tt A}{\tt x} - {\tt b})] \ .
\end{align*}
If $\beta$ is chosen to be sufficiently large, this shortest vector will likely minimize ${\tt A}{\tt x} - {\tt b}$, with the vector ${\tt x}$ being the proposed integer solution. Additional details of the algorithm can be found in~\cite{borwein_2000_1, ferguson_1999_1}. 

Finding the shortest vector in the lattice is also known to be an NP-hard problem.   The potential advantages of the LLL algorithm rely on the fact that it is an approximation algorithm,  and can be expected to find a solution in polynomial time~\cite{lenstra_1983_1}.   However, as an approximation algorithm,  there is no guarantee that it will outperform ILP techniques in general.   In fact, by changing parameters in LLL, one can trade off between a faster runtime and a higher probability of finding a sufficiently short vector. However, the runtime of the LLL algorithm is $\mathcal{O}(n^5m\log^3(B))$, where $B=\max_i \|{\tt b}_i\|_2$, which implies that, for practical purposes, the polynomial time still increases quickly in the size of the problem $n$ \cite{lenstra_1983_1}.  

One downside to the LLL algorithm is that it does not incorporate known bounds on the integer values. For example, if it is known that the correct integer values are either 0 or 1, the shortest vector in the LLL-reduced basis is not guaranteed to have binary coefficients. In contrast, ILP obeys the integer bounds throughout its search.  

Taking into account the relative advantages and disadvantages between these two approaches, we use a combination of ILP and LLL in the following algorithms. In general, the LLL algorithm was found to be much more efficient when running on problems with a smaller number of unknowns, which can take integer values in a possibly large range.  This takes advantage of the fact that LLL is independent of the known bound on the integers. In contrast, ILP depends heavily on the range of the integers, and can be more reliable when the integers are known to be binary. ILP can also be effective for large problems (with many constraints) when cutting planes can reduce the overall size. Anecdotally, ILP had slightly more stability than LLL when attempting to reconstruct with only $M=1$ DFT coefficient. 

\subsection{Algorithms}
\label{sec:inv.alg}

We now describe the algorithms for reconstructing the three cases of matrix dimensions. While the three algorithms share many similarities, we consider each case separately.

\subsubsection{Case when $N_1\neq N_2$ are both prime}

By the theoretical results for uniqueness, we assume access to only the 4 DFT coefficients $\tilde{X}_{00}, \tilde{X}_{01}, \tilde{X}_{10}$, and $ \tilde{X}_{11}$. As described in the beginning of this section, we first consider the smaller problem of using $\tilde{X}_{00}$ and $\tilde{X}_{01}$ to reconstruct the column sums of ${\tt X}$. Thus we consider the problem
\begin{align}
\label{eq:intlin_col}
{\tt A}^{(01)}{\tt c} = {\tt b}^{(01)} \ , \ \   0 \le c_i \le N_2 \ ,
\end{align}
where the unknown vector ${\tt c}$ represents the column sums of ${\tt X}$, and ${\tt A}^{(01)}$ and ${\tt b}^{(01)}$ refer to the respective sub-matrix of ${\tt A}$ and sub-vector of ${\tt b}$ containing only the rows corresponding to $\tilde{X}_{00}$ and $\tilde{X}_{01}$. The columns of ${\tt A}^{(01)}$ are similarly restricted to only have one representative entry from each column of ${\tt X}$. In line with the previous discussion, even though the number of unknowns has been greatly reduced from $N_1N_2$ to $N_1$, and the bound on the integers has been increased to $N_2$, solving this problem using ILP was preferable for stability reasons as $M=1$, where $M$ is the number of DFT coefficients corresponding to this directional sum. 

After finding the column sums ${\tt c}$ via \cref{eq:intlin_col}, we solve the corresponding problem
\begin{align}
\label{eq:intlin_row}
{\tt A}^{(10)}{\tt r} = {\tt b}^{(10)} \qquad ; \qquad  0\le r_i \le N_1 \ ,
\end{align}
to obtain the corresponding row sum vector ${\tt r}$. In \cref{eq:intlin_row}, the matrix ${\tt A}^{(10)}$ and ${\tt b}^{(10)}$ contain only the rows pertaining to $\tilde{X}_{00}$ and $\tilde{X}_{10}$. With this additional row information in hand, we finally solve the full binary system 
\begin{align}
\label{eq:intlin_col3}
\begin{bmatrix}
{\tt A} \\ - \\ {\tt C} \\ - \\{\tt R} \end{bmatrix}
{\tt x} = \begin{bmatrix} {\tt b} \\ - \\ {\tt c} \\ - \\ {\tt r} \end{bmatrix}\qquad ; \qquad  x_i\in\{0,1\} \ ,
\end{align}
where the $N_2\times N_1N_2$ binary matrix ${\tt C}$ contains ones appropriately to sum the column entries of ${\tt x}$. As ${\tt x}$ is formed by stacking the columns of ${\tt A}$, ${\tt C}$ is defined by
\begin{align}
\label{eq:col_matrix_def}
C_{mn} = \begin{cases} 
1 &  (m-1)N_1+1 \le n \le mN_1 \\
0 & \text{else}
\end{cases} \ .
\end{align}
The matrix ${\tt R}$ is defined similarly to ${\tt C}$ as in \cref{eq:col_matrix_def} to sum the rows of ${\tt X}$ based on the ordering of ${\tt x}$. For storage efficiency, one can remove the $(00)$, $(01)$, and $(10)$ rows from ${\tt A}$ and ${\tt b}$ in \cref{eq:intlin_col3}, as this information is already contained in the ${\tt C}$  and ${\tt R}$ matrix blocks. This remaining system finds the binary matrix, which matches the DFT coefficient $\tilde{X}_{11}$ in the reduced search space with given column and row sums. As this is a larger system with binary integer bounds, it is generally more efficient to solve by using ILP. This is summarized in \cref{algo:case1}.

\begin{algorithm}
\begin{algorithmic}[1]
\STATE {\bf Input}: DFT Coefficients $\tilde{X}_{00},\tilde{X}_{01},\tilde{X}_{10},\tilde{X}_{11}$
\STATE {\bf Output}: $N_1 \times N_2$ binary matrix ${\tt X}$
\vspace{3mm}
\STATE	Use ILP to reconstruct column sums ${\tt c}$ using \cref{eq:intlin_col}
\STATE	Use ILP to reconstruct row sums ${\tt r}$ using  \cref{eq:intlin_row}
\STATE	Use ILP to solve \cref{eq:intlin_col3} for binary matrix ${\tt X}$
\end{algorithmic}
\caption{\label{algo:case1} Reconstruction algorithm for $N_1 \times N_2$ binary matrices where $N_1\neq N_2$ are primes.}
\end{algorithm} 

We remark that, when $N_1\gg N_2$, it may be computationally faster to skip solving for the row sums as a separate subproblem. That is, immediately after solving \cref{eq:intlin_col}, one can solve an equation of the form \cref{eq:intlin_col3} without the ${\tt R}$ block. Similarly, if $N_2\gg N_1$, it may be prudent to ignore solving for the column sums as its own subproblem.   The overall runtime considerations of \cref{algo:case1} are governed by the size of the search spaces for each subproblem, as discussed in Section \ref{sec:inv.gen}.

\subsubsection{Case when $N_1=N_2=N$ where $N$ is prime}

We take a similar algorithmic approach for reconstructing square $N \times N$ binary matrices. We again reconstruct the row and column sums of the matrix via \cref{eq:intlin_col} and \cref{eq:intlin_row} but can utilize the additional available DFT coefficients (as required by \cref{thm:2D_sq}) to hopefully reconstruct  larger matrices in a stable manner. 

The matrix ${\tt A}^{(01)}$ in \cref{eq:intlin_col} was used to solve for the column sums, which were contained in the DFT coefficient $\tilde{X}_{01}$. For $L = L_0 =  \lfloor\sqrt{N}\rfloor$, the corresponding submatrix ${\tt A}^{(01)}$ contains additional rows corresponding to the available DFT coefficients $\tilde{X}_{01}, \ldots , \tilde{X}_{0L}$, which are all equivalent to the column sum information. These extra equations improve the reconstruction speed and stability of recovery. As we now have a moderately sized system with $M>1$  DFT coefficients that encode column sum information, this system is efficiently solved using the LLL algorithm. After reconstructing the row and column sums, instead of immediately attempting to match a binary matrix with given row and column sums to the remaining DFT coefficients, we repeat this process for additional directions. For example, an analogous ILP problem can be set up to solve ${\tt A}^{(11)} {\tt d}^- = b^{(11)}$ which solves for the sums along the diagonal lines of slope -1 (using the DFT coefficients $\tilde{X}_{11},\dots,\tilde{X}_{LL}$).  

This can be repeated for all $N+1$ directions. However, while the row, column, and diagonal directions (slopes of $\pm 1$) all have $L$ related coefficients, no other direction will have $L$ coefficients, with possibly many directions only having one related coefficient. This can have an adverse effect on the computational efficiency and stability of recovery. Thus, the LLL algorithm may fail to recover the directional sums for certain directions. As a check, if the resulting shortest vector is not sufficiently short (using a predefined error tolerance), we ignore that direction and only include its information as the DFT coefficient, as was done for $\tilde{X}_{11}$ in \cref{algo:case1}. In our implementation, we used the maximum norm ($\|{\tt e}\|_\infty = \max_i |e_i|$) to measure the magnitude of this shortest vector. For improved stability, we do not attempt to reconstruct the directional sums along directions with only $M=1$ DFT coefficient, and similarly include the DFT coefficient value as a constraint.

After attempting to solve for the directional sums along all $N+1$ directions (skipping any with $M=1$), we form an ILP problem of the form \cref{eq:intlin_col3}. A block is added for each successful directional recovery that sums the entries along those directions as in \cref{eq:col_matrix_def}. The corresponding rows from the ${\tt A}$ block can be removed, with the remaining rows of ${\tt A}$ corresponding to directions with unsuccessful recoveries. Pseudocode for this algorithm is provided in \cref{algo:case2}. 

Each call of the LLL algorithm roughly scales as $N^6$ (recall runtime is $\mathcal{O}(n^5m\log^3(B)$) in \cref{algo:case2}.  This rough estimate ignores the $B$ term, and sets $n=m=N$.  As \cref{algo:case2} calls the LLL algorithm up to $N+1$ times, the total runtime can be proportional to $N^7$.  In practice, for larger values of $M$, it is anticipated that this additional data will help the algorithm converge quicker.  The main idea of \cref{algo:case2} is that the final ILP step will run very quickly as the size of the search space will be drastically reduced. \

\begin{algorithm}
\begin{algorithmic}[1]
\STATE {{\bf Input}: DFT Coefficients $\tilde{X}_{kl}$ for all $|k|,|l|\le L_0=\lfloor\sqrt{N}\rfloor$}
\STATE {{\bf Input}: Error tolerance $\epsilon$}
\STATE {{\bf Output}: $N \times N$ binary matrix ${\tt X}$}
\vspace{3mm}
\FOR{$j=1$ \TO $N+1$} 
	\STATE{Collect $M$ available DFT coefficients corresponding to direction $j$}
	\IF{$M>1$}
			\STATE{Use LLL and the $M$ DFT coefficients to find the shortest vector corresponding to the reconstruction of directional sums ${\tt d}_j$}
			\IF{$\|${\rm shortest vector}$\|_\infty<\epsilon$}
				\STATE{Successful Recovery: Create corresponding block ${\tt D}_j$}
			\ENDIF
	\ENDIF	
\ENDFOR	
\STATE{Create matrix with blocks ${\tt D}_j$ for all values of $j$ corresponding to a successful recovery and ${\tt A}$ containing DFT terms for unsuccessful recoveries.}
\STATE{Create  right hand side vector with corresponding blocks ${\tt d}_j$ and the DFT coefficients.}
\STATE{Use ILP to solve for binary matrix ${\tt X}$ using this matrix and right hand side.}
\end{algorithmic}
\caption{\label{algo:case2} Reconstruction algorithm for $N \times N$ binary matrices where $N$ is prime.}
\end{algorithm}

\subsubsection{Case when $N_1=N_2=p^\alpha$ where $p$ is prime and $\alpha>1$}

For square matrices of the size $N=p^\alpha$, more care is required. We will focus on the case when $\alpha=2$, but similar ideas hold in theory for $\alpha>2$. For $N=p^2$, as seen in~\cref{eq:tildeX_k0}, $\tilde{X}_{0p}$ is equivalent to knowing how many entries in total are in the column numbers that are equal modulo $p$. Refer to these combined column sums as $C_j$ for $j=1,\dots,p$. The values of $C_j$ can be solved quickly using the LLL algorithm, as there are only $p$ unknowns as opposed to $p^2$ (where each $C_j$ is bounded above by $p^3$). After this information is recovered, the remaining coefficients of the form $\tilde{X}_{0k}$ for $k<p$ are equivalent to knowing the individual column sums. One can set up a linear system of the form \cref{eq:intlin_col} to solve for the column sums $c_i$, with an additional block containing the constraints already obtained from $\tilde{X}_{0p}$. These additional linear constraints are of the form
\begin{align*}
\sum_{i=j \pmod p}\hspace{-0.55cm}c_i = C_j\ , \ \ \mbox{for} \ j=1,\dots,p \ . 
\end{align*}
Identical results hold for the row sum by first using $\tilde{X}_{p0}$ and subsequently looking at \\$\tilde{X}_{10},\dots,\tilde{X}_{p-1,0}$. This is also true for the diagonal sums using $\tilde{X}_{pp}$ and $\tilde{X}_{-p,p}$, which are all in the available DFT coefficient range. However, one cannot simply recover the sums along other directions based on specific DFT coefficients. Consider any DFT coefficient $\tilde{X}_{kl}$ where at least one of $k$ and $l$ is relatively prime with $p$. By \cref{eq:case1}, this coefficient is still a sum of roots of unity of order $N=p^2$, where each root corresponds to matrix entries $X_{mn}$ that satisfy $mk+nl=\mu \pmod{N}$, for some integer $1\le\mu\le N$. However, in \cref{eq:case1}, since the roots of unity are no longer of prime order, the value of this sum does not uniquely determine the integer coefficients.  By \cref{lem:lenstra1}, the integer coefficients can differ by a fixed constant across entries that are equal modulo $p$, and still give the same sum.  

As an illustrative example, consider  a $3^2 \times 3^2$ binary matrix with $S=40$ nonzero entries. If we are given $\tilde{X}_{1,2} = e^{2\pi \cu /9}$, we can deduce that there is at least one nonzero entry in the partition of entries with $j = 1$ from \cref{eq:case1}, which gives the exact value $e^{2\pi \cu / 9}$. However, the remaining $39$ nonzero entries still need to be distributed among the $9$ partitions. With the knowledge that $\tilde{X}_{1,2} = e^{2\pi \cu / 9}$, this distribution is not unique, but must satisfy the condition that the sum of the $39$ corresponding roots of unity is $0$, in accordance with \cref{lem:lenstra1}.   Using the notation in \cref{eq:case1},  let $\eta_j$ be the number of ones contained in the $j$th partition.  The four linear constraints for this example $\tilde{X}_{1,2}$ are thus
\begin{align}
\label{eq:mu_constraints}
\eta_1 = \eta_4 + 1 = \eta_7 + 1 \ ; \ \ \eta_2 = \eta_5 = \eta_8 \ ; \ \  \eta_3 = \eta_6 = \eta_9 \ ; \ \ \sum_{j=1}^9 \eta_j = S \ . 
\end{align}
These constraints ensure that $\tilde{X}_{1,2}=e^{2\pi i/9}$ and that all 40 ones are placed in a partition.  
However, in these linear constraints, none of the $\eta_j$ are uniquely determined from just $\tilde{X}_{1,2}$. On the other hand, \cref{thm:2D_sq_alpha} indicates that these $\eta_j$ values will be uniquely determinable in the larger context of all available DFT coefficients. 

To solve for these linear constraints, in the general case we use the LLL algorithm without $\tilde{X}_{00}$, to find a short vector that fits the coefficient. From this possible solution, one can deduce the linear constraints similar to the the form of the first 3 equations of \cref{eq:mu_constraints}.

The proposed algorithm is thus similar to \cref{algo:case2}, but with a modification to take into account that we cannot uniquely determine the sum along lines in all directions. First, reconstruct the sums along the rows, columns, and diagonal directions. These sums are uniquely determinable, and should be reasonably stable since there are $p$ related DFT coefficients. Following this step, instead of finding other directional sums, we find linear constraints that the binary matrix satisfies along these directions. Finally, we search for a binary matrix that matches all these constraints and any remaining DFT coefficients. This algorithm is summarized in \cref{algo:case3}.  The runtime considerations of \cref{algo:case3} are similar to \cref{algo:case2}, where the LLL steps of the algorithm scale like $(p^2)^7$.

\begin{algorithm}
\begin{algorithmic}[1]
\STATE {\bf Input}: DFT Coefficients $\tilde{X}_{kl}$ for all $|k|,|l|\le p$
\STATE {\bf Input}: Error tolerance $\epsilon$
\STATE{\bf Output}: $p^2\times p^2$ binary matrix ${\tt X}$
\vspace{3mm}
\FOR{\textnormal{DFT Direction in}$\{\tilde{X}_{0k},\tilde{X}_{k0},\tilde{X}_{kk},\tilde{X}_{k,-k}\}$}
	\STATE Use LLL algorithm with the DFT coefficient $k=p$ to reconstruct sums $D$ along lines modulo $p$
	\STATE Use LLL algorithm with the DFT coefficients $1\le k<p$ and $D$  to reconstruct individual directional sums ${\tt d}_j$
\ENDFOR
\FOR{\textnormal{Remaining Direction}}
	\STATE Collect $M$ available DFT coefficients corresponding to direction $j$ 
	\IF{$M>1$}
		\STATE Use LLL with the $M$ DFT coefficients (without $\tilde{X}_{00}$ to find shortest vector 
		\IF{$\|${\rm shortest vector}$\|_\infty<\epsilon$}
			\STATE Successful Recovery: Create corresponding block ${\tt D}_j$ that contains linear constraints which shortest vector obeys
		\ENDIF
	\ENDIF
\ENDFOR
\STATE Create matrix with blocks ${\tt D}_j$ for all $j$ corresponding to a successful recovery and ${\tt A}$ containing DFT terms for unsuccessful recoveries
\STATE	Create  right hand side vector with corresponding blocks ${\tt d}_j$ and the DFT coefficients
\STATE	Use ILP to solve for binary matrix ${\tt X}$ using this matrix and right hand side
\end{algorithmic}
\caption{\label{algo:case3} Reconstruction algorithm for $p^2\times p^2$ binary matrices with prime $p$.}
\end{algorithm}

\subsection{Stability}
\label{sec:inv.stab}

The intermediate steps in the algorithms described in the previous section center on finding integer coefficients for a cyclotomic integer to equal a known value, within some precision. For example, in \cref{algo:case1}, one first attempts to reconstruct the column sums by finding a cyclotomic integer of prime order $p$ whose integer coefficients are bounded by prime $q$, that matches the value of $\tilde{X}_{01}$. In \cref{algo:case2}, the same problem is considered, although it can be for one of $p+1$ potential directions, with possibly more than one corresponding DFT coefficient. Therefore, the key question when it comes to stability is how close can two distinct cyclotomic integers be to one another? 

Consider two distinct cyclotomic integers $A = \sum_{j=1}^p a_{j} \left( \zeta_{p} \right)^{j}$ and $B = \sum_{j=1}^p b_{j} \left( \zeta_{p} \right)^{j}$. Define $e_j = a_{j} - b_{j}$ so that $E = A - B = \sum_{j=1}^p e_{j} \left( \zeta_{p}\right)^{j}$. We wish to estimate how close $E$ can be to the origin of the complex plane.  Finding the exact solution to this problem is difficult~\cite{myerson_1986_1, habegger_2018_1}. However, we can provide a heuristic estimate. This will yield some insight towards the level of stability we can expect when reconstructing directional sums.  

Consider a direction with $M$ available corresponding DFT coefficients. These DFT coefficients are of the form 
\begin{align*}
\left[ \sum_{j=1}^p a_{j} \left(\zeta_{p} \right)^{j}, \sum_{j=1}^p a_{j} \left( \zeta_{p} \right)^{2j}, \dots, \sum_{j=1}^p a_{j} \left( \zeta_{p}\right)^{M j } \right] \ .
\end{align*}
If the coefficients $a_j$ and $b_j$ are bounded between $0$ and $K$, the coefficients $e_j$ satisfy $-K \le e_j \le K$. Moreover, as the total popcount $S = \tilde{X}_{00}$ is known, we have $\sum_{j=1}^p e_j = 0$. If the process of finding integer coefficients that agree with all the available DFT coefficients is unstable, then it is possible that all entries of the vector 
\begin{align*}
{\tt e} = \left[ \sum_{j=1}^p e_{j} \left(\zeta_{p}\right)^{j}\  , \ \  \sum_{j=1}^p e_{j} \left(\zeta_{p}\right)^{2j} \ , \ \ \dots \ , \ \  \sum_{j=1}^p e_{j} \left( \zeta_{p} \right)^{M j } \right]
\end{align*}
are small. For small $R$, we will determine an approximate condition for which $\|{\tt e}\|_\infty \le R$. Let $\rho(R)$ be the expected number of valid vectors ${\tt e}$ satisfying $\|{\tt e}\|_\infty \le R$. We model each term of the form $\sum_{j=1}^p e_{j} ( \zeta_p )^{kj}$ in ${\tt e}$ as a sum of $\sum_{j=1}^p |e_{j}|$ uniform random points on the unit circle. As $n \to \infty$, the probability that a sum of $n$ random points on the unit circle has length at most $R$ approaches $1 - \exp\left( -R^2/n \right)$~\cite{greenwood_1955_1}. The $x$- and $y$-coordinates approach independent normal distributions with the standard deviation $\sqrt{n/2}$ by the central limit theorem. Using this approximation, the probability that all entries of ${\tt e}$ are less than or equal to $R$ is $\left[ 1 - \exp\left(-R^2/\sum_{j=1}^p|e_j| \right) \right]^M$. By linearity of expectations, we can approximate the expectation  that $\|{\tt e}\|_\infty<R$ as a sum over all valid choices of $e_{j}$, viz,
\begin{align}
\label{eq:E_R}
\rho(R) \approx & \sum_{\substack{|e_j|<K\\\sum_{j=1}^p e_j = 0}}\left(1-e^{-R^2/\sum_{j=1}^p|e_j|}\right)^{M} \ .
\end{align}
Since $R$ is small, we approximate each term inside the sum using the linearization $e^x\approx 1+x$.
Moreover, similar to \cref{2D_comb1}, by the inclusion-exclusion principle, one can compute the total number of terms in this sum to be
\begin{align*}
\nu(p,K)= \sum_{n\ge 0}(-1)^n{p\choose n}{Kp-n(2K+1)+p-1\choose p-1} \ .
\end{align*}
By setting $\sum |e_{j}|=(2K+1)p/4$, which is roughly its average value, in \cref{eq:E_R}, and replacing the sum over its $\nu(p,K)$ choices of $e_j$, we have the reduced approximation
\begin{align}
\label{eq:E_RKMp2}
\rho(R) \approx \nu(p,K) \left[ 4R^2 / (2K+1)p \right]^M \ . 
\end{align}
The only solution to $\sum_{j=1}^p e_j ( \zeta_p)^{kj} = 0$ that also satisfies $\sum_{j=1}^p e_j =0$ is $e_j=0$. Thus we expect that, for small enough $R$, $\rho(R) \approx 1$. Setting this equal to our approximation \cref{eq:E_RKMp2} and solving for $R$, we find  
\begin{align*}
R^{2M} \approx (2K+1)^{M}(p/4)^M[\nu(p,K)]^{-1} \ ,
\end{align*}
so that we expect to require roughly
\begin{align}
\label{eq:stability1}
- \log(R) = \log(2)-\log\left(\sqrt{p(2K+1)}\right) + \log(\nu(p,K))  / 2M \ .
\end{align}
digits of precision to distinguish integer coefficients for the $M$ cyclotomic integers.

We emphasize that the result \cref{eq:stability1} is an approximation, and may not be accurate for small $p$. A more careful analysis would additionally account for the fact that sums of few points on the unit circle are significantly more likely to be small. However, when $K^2$ is large compared to $p$ (which is typical in applications), this contribution becomes negligible. So, we content ourselves with the above heuristic, keeping in mind that it may underestimate the precision needed.

\section{Numerical examples}
\label{sec:sims}

We next conduct numerical simulations to test the proposed recovery algorithms. In our implementation of all three algorithms, we use MATLAB's built-in solver for ILP, \texttt{intlinprog}, which uses cutting planes and other preprocessing steps to reduce the size of the computational domain. The prescribed stopping condition for any call of \texttt{intlinprog} was set to checking $10^7$ possible matrices.  

\begin{wraptable}{R}{5.0cm}
\centering
\begin{tabular}{llll}
$N_1$ & $N_2$ & $t$, sec. & $n_{\tt d}$ \\
\hline
\hline
5     & 7     & 0.05  & 2 \\
5     & 11    & 8     & 4 \\ 
5     & 13    & 10    & 5 \\
\hline
7     & 11    & 64    & 5 \\
7     & 13    & 84    & 6 \\
7     & 17    & 111   & 8 \\
\hline
11    & 13    & 91    & 7 \\
\hline
\end{tabular}
\vspace{2mm}
\caption{\rm \label{tab:1} Time $t$ for reconstructing $N_1\times N_2$ binary matrices with $S=\lfloor N_1 N_2 /2\rfloor$ nonzero entries using \cref{algo:case1}. Averages for $30$ randomly-generated model matrices are displayed. The column $n_{\tt d}$ displays the number of digits needed for stable recovery of the column sums with $M=1$ according to \cref{eq:stability1}.}
\end{wraptable}

The implemented LLL algorithm was programmed in MATLAB. After sufficient testing, the large constant parameter in \cref{eq:LLL} was set to $\beta =10^8$. The error tolerance to determine if a vector is sufficiently short was $\epsilon = 0.001$. In our implementations, we made one modification for practical time considerations. Some runs of the LLL-algorithm can take a very long time and ultimately fail to recover a sufficiently short vector. To avoid waiting too long for a failed recovery, we set a time limit on the LLL algorithm to 5 seconds. This stopping criterion was found to be a good balance between minimizing the computation time while not overlooking any feasible reconstructions. All computations were carried out in double precision.

\subsection{\cref{algo:case1} for rectangular matrices}
\label{sec:sims.1}

For different prime values of $N_1$ and $N_2$, a model $N_1\times N_2$ binary matrix generated with $S=\lfloor N_1N_2/2\rfloor$ nonzero entries, chosen uniformly at random. \cref{algo:case1} was run given the 4 DFT coefficients $\tilde{X}_{00}$, $\tilde{X}_{10}$, $\tilde{X}_{01}$, and $\tilde{X}_{11}$, to try to recover the original binary matrix exactly. For fixed values of $N_1$ and $N_2$, this experiment was run $30$ times, with the average timing (in seconds) displayed in \cref{tab:1}.  

\begin{wrapfigure}{r}{9.0cm}
\centering
\includegraphics[width=4.54cm]{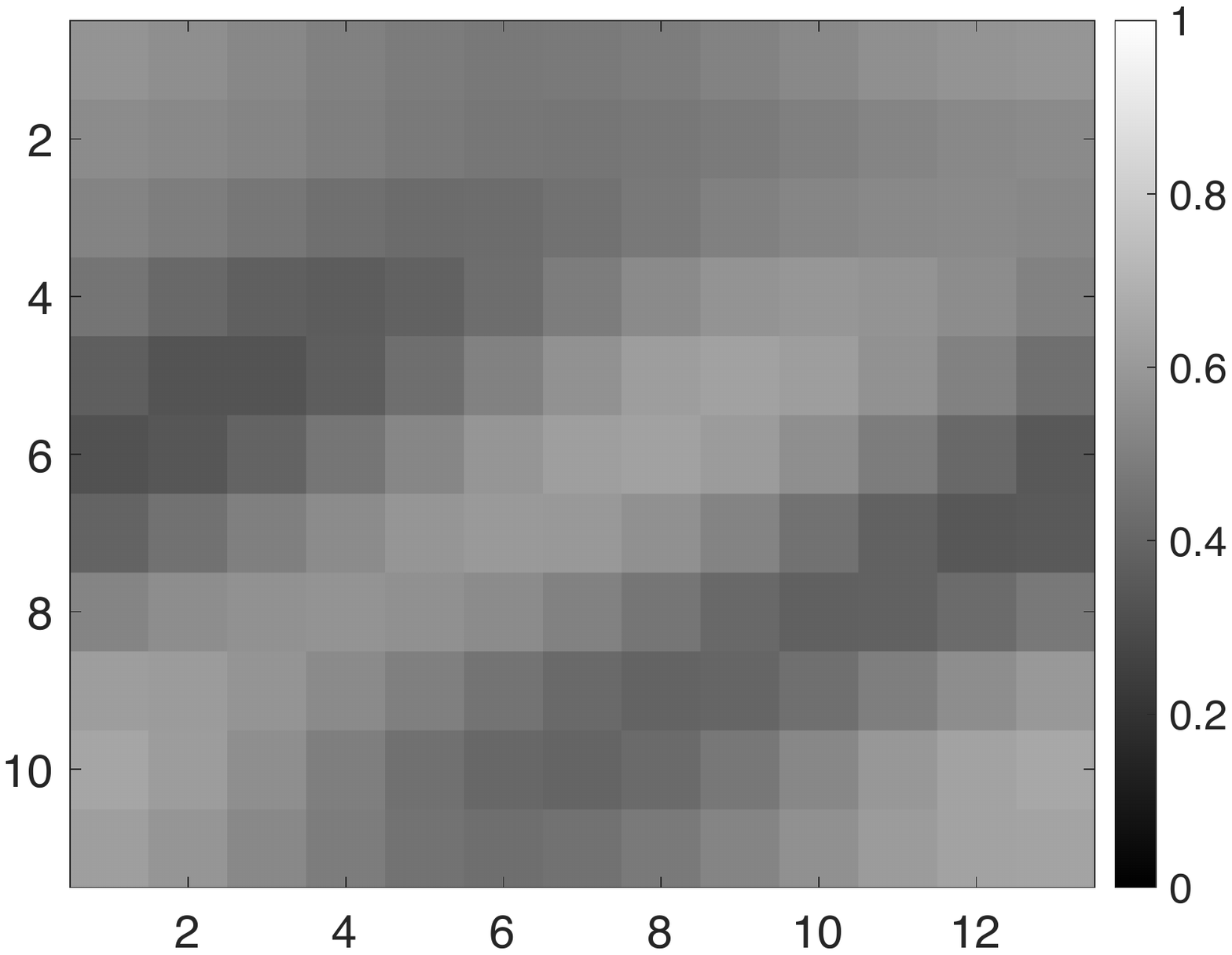} \  \includegraphics[width=4.1cm]{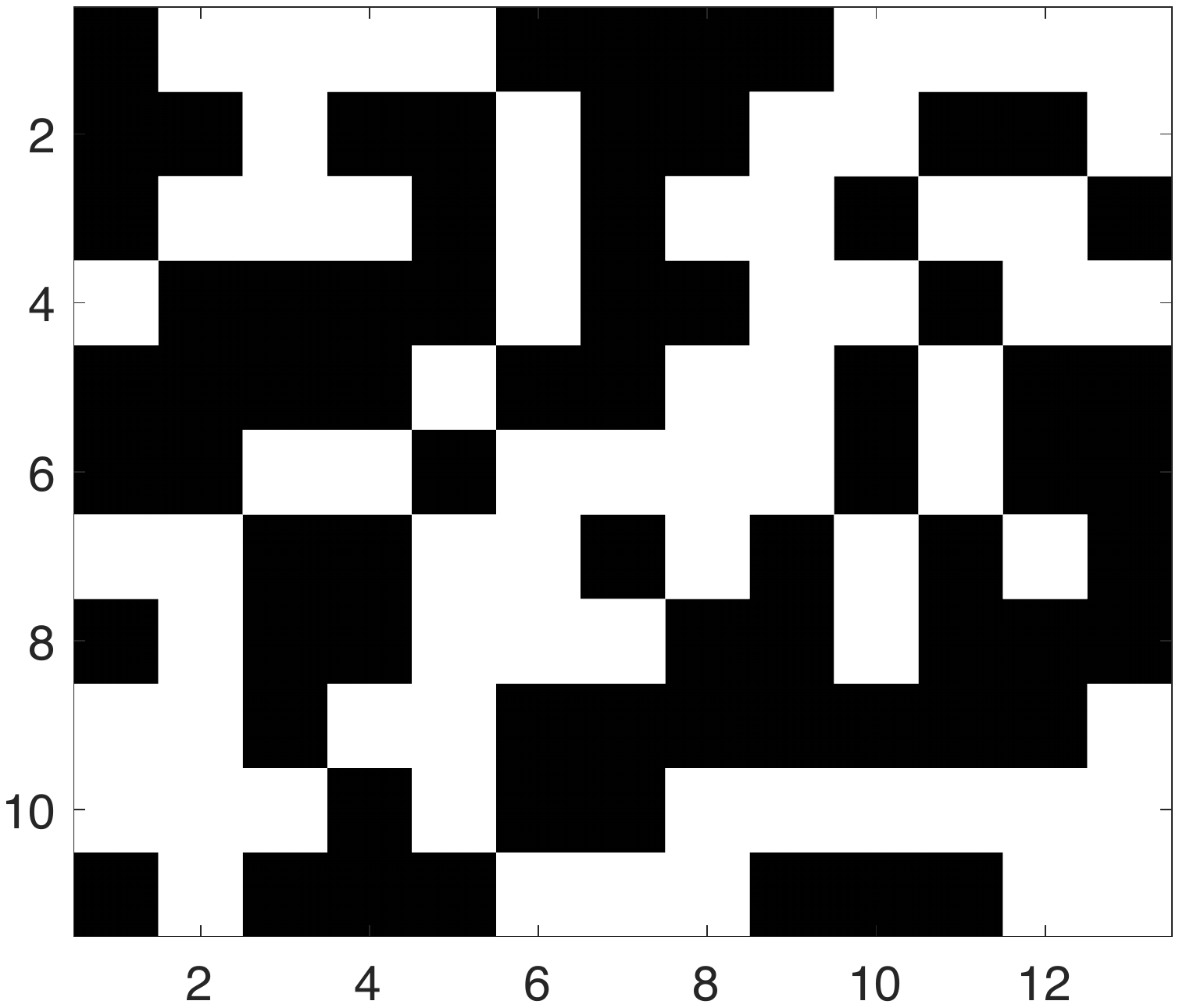}
\caption{\rm \label{fig:results1} Left: reconstruction with only $4$ available DFT coefficients (as required by \cref{thm:2D_rect})  of a randomly generated binary $11 \times 13$ matrix with $S=71$ ones. Right: reconstruction obtained by \cref{algo:case1} (coincides exactly with the model).}
\end{wrapfigure}

\Cref{algo:case1} was able to consistently recover the original binary matrix for dimensions as large as $11\times13$. A recovery is considered successful if it reconstructs all elements of the matrix correctly. For all of the dimensions displayed in \cref{tab:1}, \cref{algo:case1} was successful in all 30 trial runs.  Matrices of dimension $11 \times 13$ were the largest that could be reliably recovered within the prescribed stopping criteria, which took on average one and a half minutes. Note that $7\times17$ matrices have fewer elements but tended to take longer to be recovered due to the larger column dimension making the row sum recovery more computationally demanding.

As a comparison, we note that naively running ILP on the entire system \cref{eq:intlin}, as opposed to first considering the subproblems of recovering row and column sums, took on average 2.3 seconds (about 40 times longer than by \cref{algo:case1}) for $5\times 7$ matrices, and was unable to scale to $5 \times 11$ matrices under the prescribed stopping conditions.  

A band-limited reconstruction and the reconstruction by \cref{algo:case1} of a sample $11 \times 13$ binary matrix are shown in \cref{fig:results1}. The band-limited reconstruction ${\tt X}^{\tt (blurred)}$ is given by
\begin{align}
\label{eq:bl}
X_{mn}^{\tt (blurred)} = \frac{1}{N_1 N_2} \sum_{k=-L_1}^{L_1} \sum_{l=-L_2}^{L_2} \tilde{X}_{kl} e^{-2\pi\cu \left(mk / N_1 + nl / N_2 \right)} \ ,
\end{align}
which is identical to \cref{DFT_inv_2D} except the summation now runs only over the available pass band with parameters $L_1$ and $L_2$.   For this $N_1\times N_2$ case,  we set $L_1=L_2=1$ for the band-limited reconstruction in \cref{eq:bl} , but in accordance with \cref{thm:2D_rect}, the terms with $\tilde{X}_{1,-1}$ and $\tilde{X}_{-1,1}$ are removed from the sum (in the reconstruction, these DFT coefficients are not used).  It can be seen that, with such few data points, the band-limited reconstruction has little resemblance to the original matrix. However, a reconstruction that takes into account the matrix binarity returns the model exactly 

According to \cref{eq:stability1}, the subproblems of reconstructing the row- and column-wise sums is stable when working in double precision, even with only one DFT coefficient (either $\tilde{X}_{01}$ or $\tilde{X}_{10}$).  The number of digits after the decimal place estimated by the heuristic for $M=1$ for reconstructing the column sums (the more difficult direction) are given in the last column of \cref{tab:1}. For example, for $11 \times 13$ matrices, the heuristic suggests that we need about 7 digits to stably reconstruct the column sums. This corresponds to noise level of the magnitude of $\sim10^{-8}$ relative to $\tilde{X}_{10}$, which is of the order of unity. If we reduce the number of known digits to 6 (noise level of $\sim 10^{-7}$), the column sums for the 30 randomly generated models are reconstructed correctly in 6 cases. This strong instability can be rectified by including more DFT coefficients in the data set, beyond the minimum required for theoretical uniqueness. For the band limit parameter $L=2$, we have $M=2$ available DFT coefficients for reconstructing the column sums ($\tilde{X}_{10}$ and $\tilde{X}_{20}$). In this case, the column sums for all 30 matrices are recovered correctly with only three significant digits in the data.

\begin{wraptable}{r}{8cm}
\centerline{
\begin{tabular}{rrcrrc}
$N$    &$L$ & Rec., \% & $t$, sec. & Dir. & $n_{\tt d}$  \\
\hline\hline
17     & 4    & 100    & 2      & 14   &  5    \\
\hline
19     & 4    & 99     & 8      & 11   &  6    \\
19     & 5    & 100    & 3      & 20   &  $|$  \\
\hline
23     & 4    & 0      & --     & 8    &  8    \\
23     & 5    & 100    & 3      & 20   &  $|$  \\
\hline
29     & 5    & 96     & 5      & 16   &  11   \\
29     & 6    & 100    & 12     & 24   &  $|$  \\
\hline
\vspace{2mm}
\end{tabular}}
\caption{\rm \label{tab:2} Summary of reconstruction results for $N \times N$ binary matrices with global popcount $S=\lfloor N^2/2\rfloor$ using \cref{algo:case2}. The parameter $L$ indicates the pass band used. The next column displays the percentage of exact recoveries for 100 randomly-generated model matrices. The next two columns display the average timings and the average number of recovered directional sums. The last column displays the number of digits estimated by \cref{eq:stability1} that are needed for stable recovery of a directional sum with $M=2$ DFT coefficients. Reconstruction that took longer than the prescribed stopping condition is denoted by a dash.} 
\end{wraptable}

\subsection{\Cref{algo:case2} for $N \times N$ matrices with prime $N$}
\label{sec:sims2}

\Cref{algo:case2} was run on $100$ randomly-generated $N \times N$ binary matrices, for $N = 17, 19, 23, 29$. In each case, the global popcount was set to $S=\lfloor N^2/2\rfloor$, which is the most difficult case. The results of the simulations are summarized in \cref{tab:2}, which contains the average run time of the algorithm, the percentage of model matrices that were exactly recovered, and how many directional sums (out of $N+1$) were recovered on average by the LLL algorithm. Additionally the last column displays the stability estimate \cref{eq:stability1}, in terms of the number of digits in the data, for the most unstable directional sum recoveries with $M=2$ DFT coefficients (as any with $M=1$ are automatically skipped).

For $N=17$, when $L=\lfloor \sqrt{17}\rfloor=4$, the algorithm was able to reconstruct all 100 models in an average of 2 seconds. This is substantially faster than the implementation of \cref{algo:case1}, as we are now using a larger bandwidth of available DFT coefficients in accordance with the theory. The larger bandwidth provides more coefficients than are minimally required for uniqueness, which improves computational speed and stability. For example, we now have access to $\tilde{X}_{20}$, which provides equivalent information to $\tilde{X}_{10}$. This increased stability allows us to use LLL algorithm, which runs much faster than ILP. Out of the $N+1=18$ possible directions, 4 directional sums are skipped in the algorithm for having only 1 corresponding DFT coefficient.  On average 13.98 (this number is rounded off as 14 in \cref{tab:2}) of the remaining 14 directions were reconstructed accurately. Note that the final ILP step of the algorithm finds the unique solution quickly as is not a bottleneck.

As we increase the dimensions to $N=19$, but keep $L=4$, the average run time increases to about 8 seconds. There are now 8 directions that are skipped due to having only 1 DFT coefficient, and the algorithm reconstructs 11 of the remaining 12 directions on average. Most notably, we have our first instance of failed reconstruction where exactly 1 model matrix was not reconstructed accurately (out of 100). The algorithm in this case fails by ILP reporting that the linear system is inconsistent over the integers. Upon closer investigation, it is seen that the inconsistent system is caused by one of the LLL solves finding an incorrect directional sum due to an instability -- it found a sufficiently short vector, but not the correct one. Even though the stability heuristic suggests that 5 digits should be enough for stability, this is not the case for this model. It is not altogether surprising that there is an outlier, as the heuristic was based on statistical arguments. The fast notification of failure by the algorithm is important, as it did not return a misleading answer. This reconstruction could be remedied by removing one of the reconstructed directional sums by trial and error until ILP runs successfully. Another alternative for reconstructing this failed model is to improve stability by increasing the number of available DFT coefficients. When $L$ is increased to 5, which is more than required for uniqueness, all 100 models are reconstructed, in an average of under 3 seconds, where all $N+1=20$ directions are almost always reconstructed.  

The $N=23$ case is an interesting example. With $L=4$, only $8$ out of the $24$ directions have more than $1$ corresponding DFT coefficient. These $8$ directional sums are accurately reconstructed for each model. However, this does not provide enough information for making the final ILP step and finding the unique solution before the prescribed stopping criteria. Stability issues prevent reconstruction of the correct matrix if we remove this restriction on directional sums with only one coefficient (\cref{eq:stability1} suggests that about $17$ digits are required for $M=1$). If we increase $L$ and take $L=5$, the algorithm works for all models.  

\begin{wrapfigure}{r}{9cm}
\centering
\includegraphics[width=4.57cm]{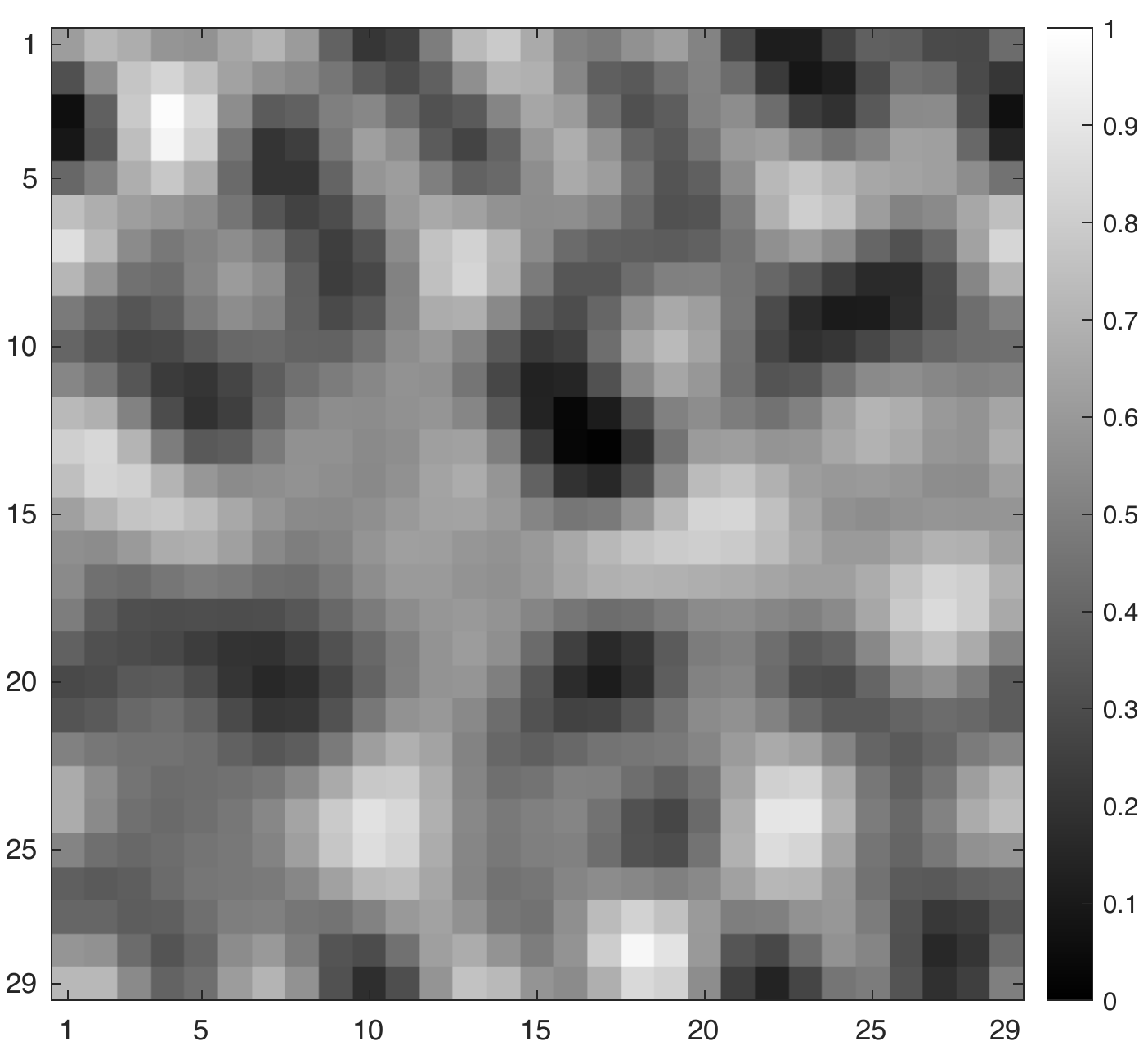}\  \includegraphics[width=4.16cm]{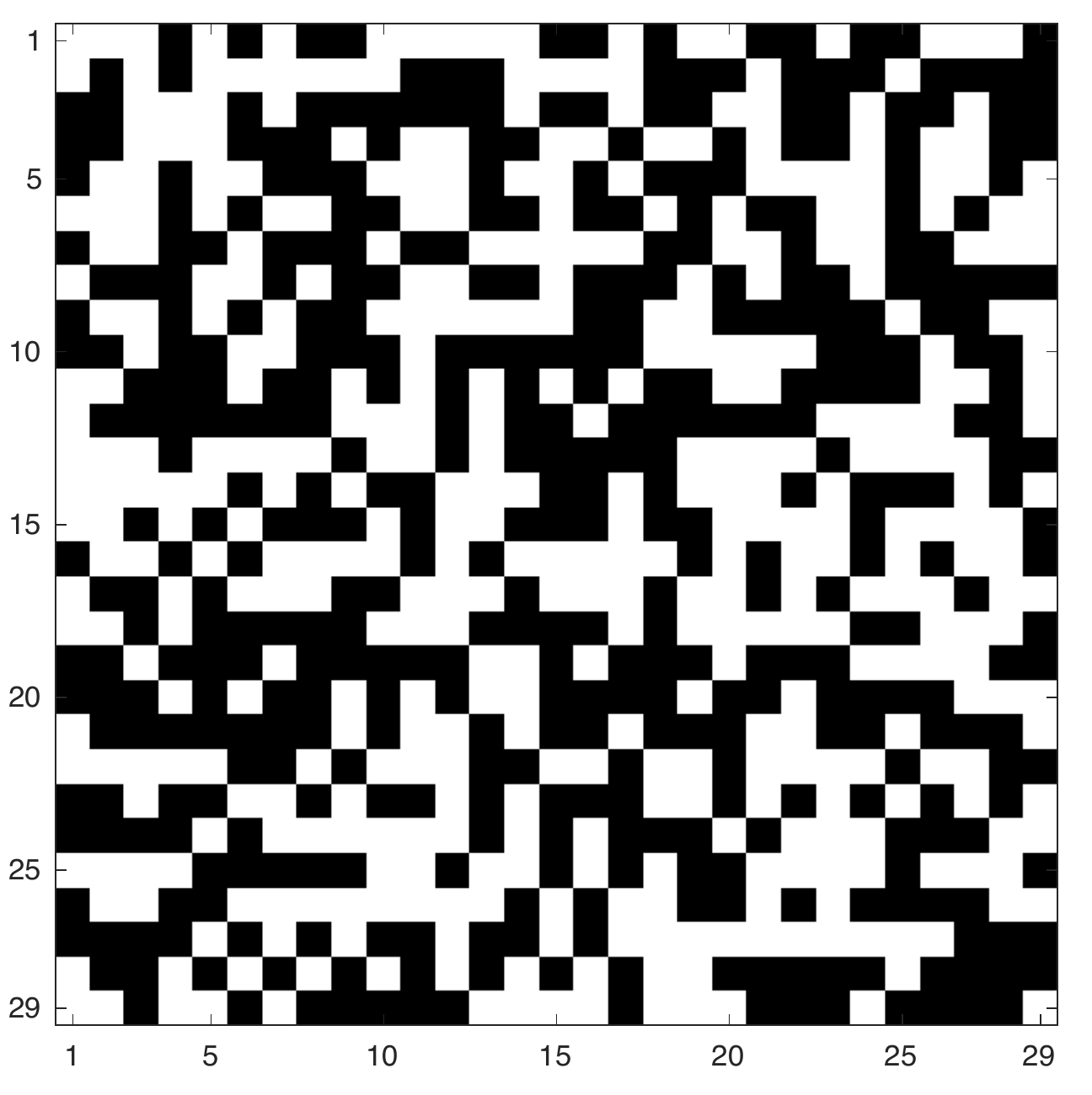}
\caption{\rm \label{fig:results2} Left: band-limited reconstruction with $L=5$ of a randomly-generated binary $29 \times 29$ matrix with the global popcount $S=420$. Right: reconstruction by \cref{algo:case2} (exactly coincides with the model).}
\end{wrapfigure}

The case $N=29$ has similar behavior to the $N=19$ case. At the minimal band limit parameter $L=5$, the algorithm almost always successfully recovers the model binary matrix, failing for 4 out of the 100 models. A band-limited reconstruction and the reconstruction by \cref{algo:case2} with $L=5$ of a sample model $29 \times 29$ binary matrix are shown in \cref{fig:results2}. When the algorithm fails, it fails, as above, by an unstable LLL step that causes an inconsistency in the ILP step. This can, again, be remedied by increasing $L$ to $6$. This increases the average time from about 5 seconds to 12 seconds, but recovers more directions on average ($24$ as opposed to $16$). 


As an example of reconstruction with noisy data, we have added Gaussian white noise to the DFT coefficients of the model in \cref{fig:results2} with variance $10^{-4}$ which only corrupted the DFT coefficients beyond 3 digits past the decimal point. Reconstruction failed until $L$ was increased to $L=9$.  At this bandwidth, 21 out of the 30 directional sums were recovered and the model was exactly reconstructed.  Importantly, the smallest number of DFT coefficients for any direction is now $M=4$. This value of $M$ requires 5 digits for stability according to \cref{eq:stability1}. However, the reconstruction in this case outperforms the heuristic.

Based on the success of recovering random $29 \times 29$ binary matrices, as a motivated example we seek to recover a blurred QR code. A $29 \times 29$ QR code that encodes the phrase ``{\tt DiscreteFourierTransform}" was generated according to the standard format specifications, known as a Version 3 QR code for this size. With the minimum bandwidth required for unique recovery $L=5$, \cref{algo:case2} was run on this incomplete set of DFT coefficients. Note that no additional QR code information was used -- the image was treated by the algorithm as a general binary matrix. For example, Version 3 QR codes have fixed patterns, including the recognizable position detector patterns present in three of the corners. Even though these fixed patterns are known based on the size of the QR code, the algorithm treats these as general regions which need to be reconstructed. This QR code information could certainly be added to the algorithm to improve computational speed and stability. The blurred QR code and its reconstruction using \cref{algo:case2} (which exactly recovers the original code) are displayed in \cref{fig:results2_1}. The reconstruction was done in about 6 seconds, with 15 out of the possible 30 directions recovered before the ILP solve.

\begin{wrapfigure}{r}{9cm}
\centering
\includegraphics[width=4.59cm]{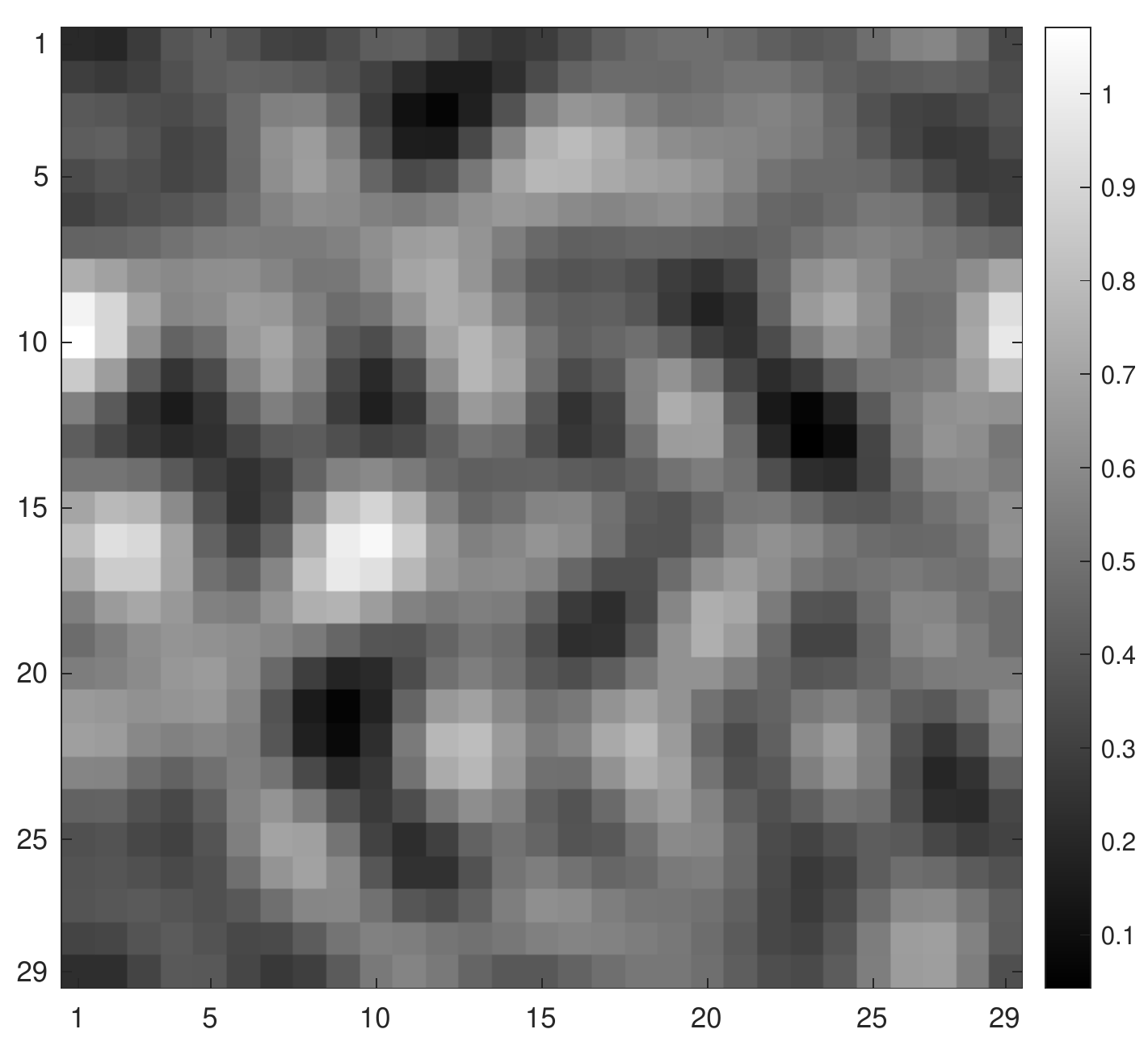}\  \includegraphics[width=4.13cm]{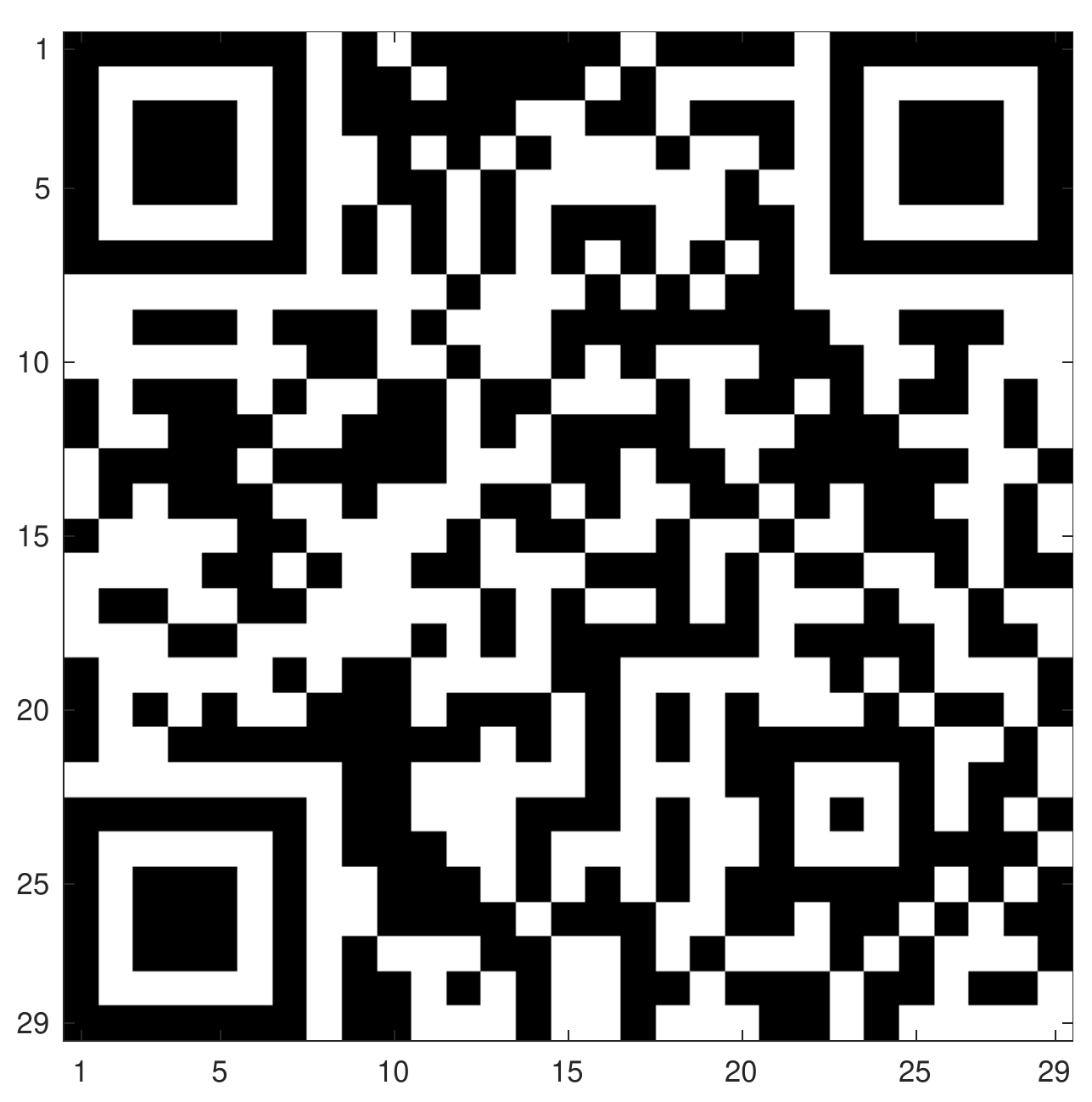}
\caption{\rm \label{fig:results2_1} Left: band-limited reconstruction with $L=5$ of a $29\times 29$ Version 3 QR code with $S=410$ nonzero entries. Right: reconstruction by \cref{algo:case2} (exactly coincides with the model).}
\end{wrapfigure}

\subsection{\Cref{algo:case3} for $p^2\times p^2$ matrices with prime $p$}

Finally, \cref{algo:case3} was tested on $25\times25$ binary matrices. With the available computational resources, \cref{algo:case3} was unable to scale to the next prime power of $49\times49$. Similar to the experiment performed for \cref{algo:case2}, we tested the algorithm on 100 randomly generated binary matrices in the most computationally difficult regime of $S=\lfloor 25^2/2 \rfloor = 312$ nonzero entries.  

With all DFT coefficients within the band limit defined by $L=5$, \cref{algo:case3} was able to exactly reconstruct the randomly generated binary matrix 87 out of 100 times in an average of about 25 seconds. This average timing includes both successful and failed recoveries. It is understandable that \cref{algo:case3} performed slightly worse than \cref{algo:case2}, as we can only reconstruct certain linear constraints for many of the directions for $p^2\times p^2$ matrices, as opposed to the directional sum values themselves. In all 100 simulations, the algorithm correctly recovered the only directional sums that are determinable: row, column, and diagonal directions. There were 26 remaining directions, with 12 of these directions automatically skipped for having only one corresponding DFT coefficient. Of the remaining 14 directions, the algorithm successfully found constraints (as measured by finding a corresponding sufficiently short vector) for 10 of these, on average.  Whenever the algorithm failed, it was again due to the ILP step finding an inconsistent system, which was caused by an instability (incorrect solve) in finding constraints for one of the directions.  

As a final practical test, the phrase ``{\tt Binary Matrix Recovery}'' was encoded in a $25\times 25$ Version 2 QR code. The true binary image has $S=287$ nonzero entries. With access to the DFT coefficients inside the bandwidth of $L=5$, \cref{algo:case3} was able to exactly reconstruct the original QR code in about 24 seconds. This reconstruction and the corresponding band-limited (blurred) image are displayed in \cref{fig:QR25}.

\begin{wrapfigure}{R}{9cm}
\centering
\includegraphics[width=4.59cm]{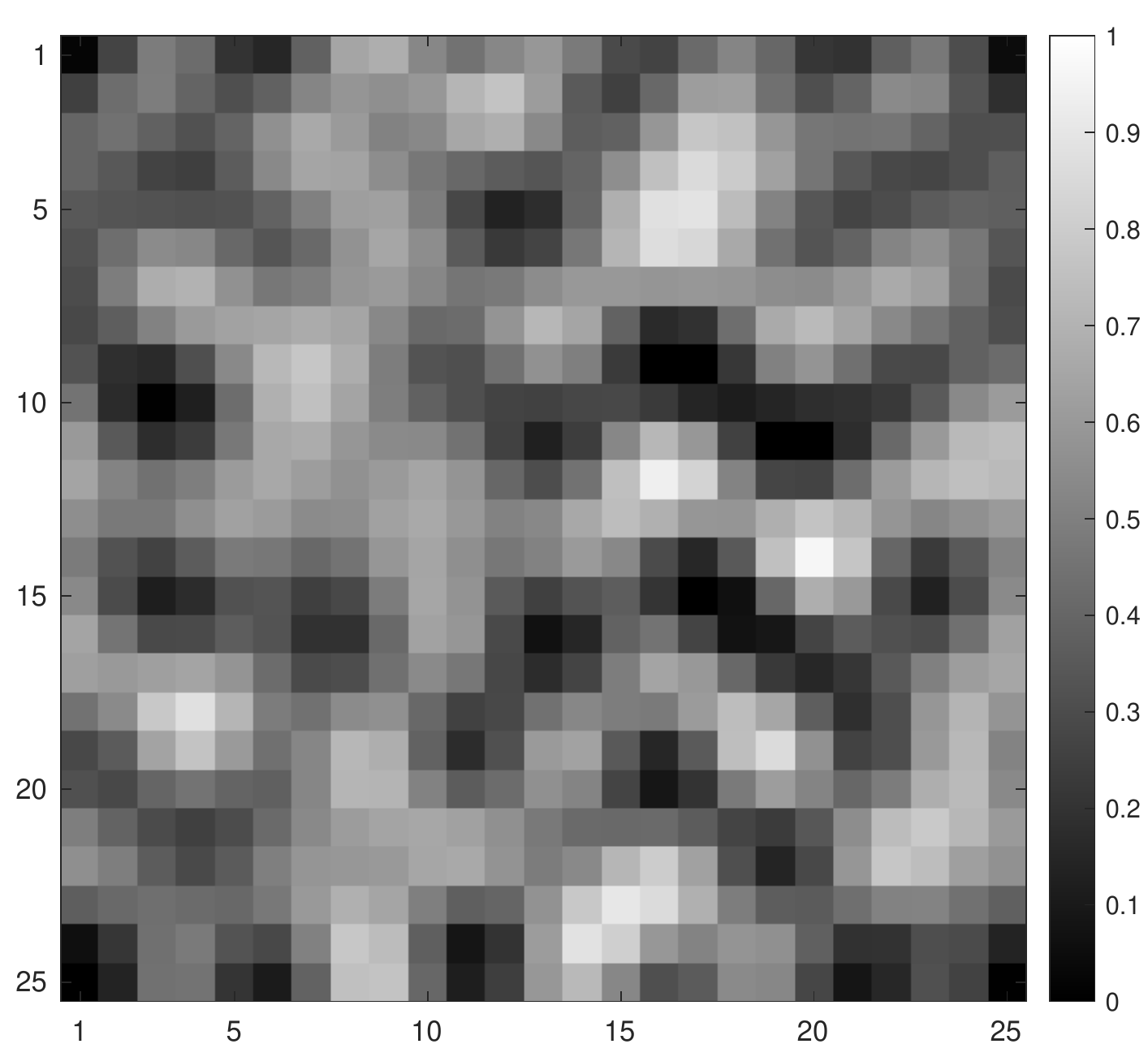}\  \includegraphics[width=4.13cm]{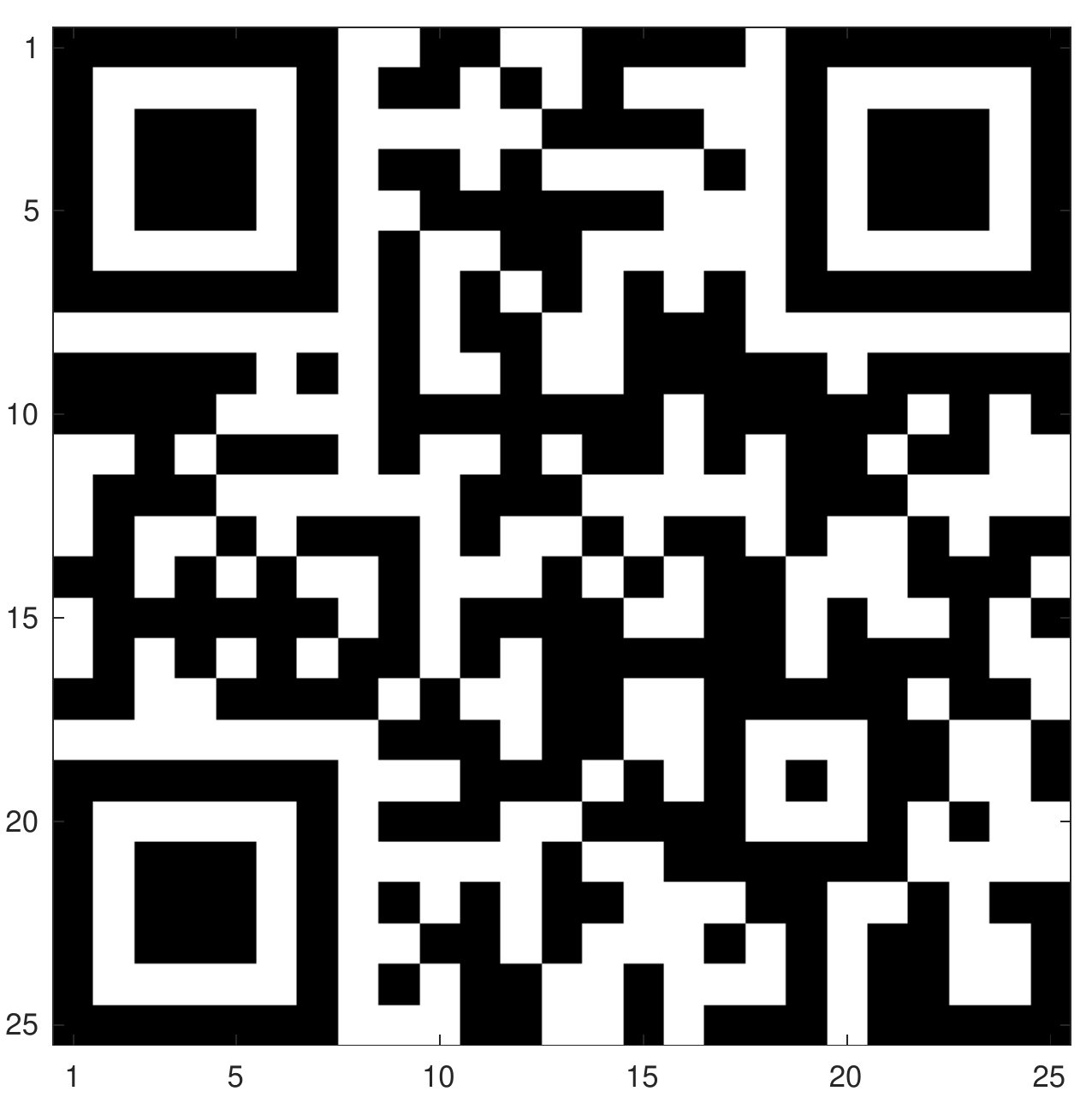}
\caption{\rm \label{fig:QR25} Left: band-limited reconstruction with $L=5$ of a Version 2 $25\times 25$ QR code with $S=287$ nonzero entries. Right: reconstruction by \cref{algo:case3} (exactly coincides with the model).}
\end{wrapfigure}

\section{Discussion}
\label{sec:disc}

We have shown that prior information that a matrix is binary allows one to reconstruct this matrix exactly from a limited set of DFT coefficients. Theoretically, for $N_1 \times N_2$ matrices with $N_1 \neq N_2$ both prime, only $4$ DFT coefficients are needed to guarantee uniqueness of this reconstruction regardless of the magnitudes of $N_1$ and $N_2$. For $N \times N$ matrices with a prime $N$, the number of required coefficients grows with $N$, but at a reasonable rate; the minimum band limit required for unique reconstruction is in this case $L=\lfloor\sqrt{N}\rfloor$. For square matrices of order $N=p^\alpha$, where $p$ is prime and $\alpha>1$ an integer, the minimum band limit is increased to $L=p^{\alpha-1}$. 

However, there exists a sizable gap between the theoretical guarantees of uniqueness and what is practical. The provided stability heuristics, which are supported by numerical examples, indicate that many digits of precision are needed in the data for reconstruction at the theoretical bounds. However, we have shown that it is possible to solve the problem even with a realistic amount of noise or imprecision in the DFT data by increasing the band limit past the theoretical bound while still not making all the coefficients available (in fact, far from that). This can also be understood by comparing the cases of square and non-square matrices with prime dimensions. In the former case, the band limit required to guarantee stability is significantly larger than in the latter case. However, we can always make a matrix square by making it larger (i.e., by adding rows or columns). Thus the theoretical results are counter-intuitive. For example, more DFT coefficients are required to recover uniquely a $29\times 29$ matrix than a $29\times 31$ matrix. However, with the account of stability, the apparent contradiction disappears. In order to reconstruct the two matrices {\em stably}, approximately the same number of DFT coefficients is needed.

In the numerical simulations, the algorithms combining integer linear programming (ILP) and Lenstra-Lenstra-Lovasz (LLL) lattice reduction were able to efficiently recover matrices as large as $29\times29$. In comparison, naive implementations of the ILP techniques fail for matrices as small as $5 \times 11$. However, even $29\times29$ matrices are on the smaller side of two-dimensional barcodes. It is therefore an open task to develop improved algorithms to handle larger binary matrix recovery in reasonable time. The current work mainly investigates recovery near the minimal band limit for uniqueness. It is worthwhile to investigate how these algorithms scale for larger matrices when $L$ is significantly larger than the minimum, while still not using all DFT coefficients.  Fast Fourier transform (FFT) and sparse FFT techniques are applicable when allowing for sampling of coefficients outside of the pass-band.  With even sparse sampling of a few high-frequency DFT coefficients could lead to scalable FFT based algorithms that have a smaller gap between theoretical results and practical reconstruction.

Additional constraints such as sparsity and connectivity can further increase computational feasibility for larger binary matrices, and allow for reconstruction with more significant noise. Sparse matrices with relatively small popcount $S$ can be considered straightforwardly by the algorithms developed here, and smallness of $S$ always entails greatly improved computational efficiency, with potential modifications.  For example, repeating the numerical experiment for \cref{algo:case2} from Section \ref{sec:sims2} for $N=29$ with smaller popcount $S=52$ resulted in about a 1 second reduction in average runtime (from 5 seconds to 4 seconds).  However, small modifications to the algorithm can increase computational efficiency further.  The overall size of the problem is significantly smaller for modest values of $S$.  In these cases, it is likely that fewer explicit directional sums are required to further reduce the overall problem to a manageable size.  For this same experiment with $S=52$, modifying the algorithm to only solve for four directional sums (row, column, and diagonals) resulted in an average run time of about 1.4 seconds, where all 100 randomly generated model matrices were successfully recovered.  Optimizing the algorithms for smaller values of $S$ is key ongoing work.  
 Connectivity is a conceptually different constraint, and its application can lead to improvements even for non-sparse matrices. 

Lastly, for applications to denoising corrupted QR codes, the algorithm can have improved computational efficiency by including additional prior information based on known QR code features. This includes fixed patterns, as well as masking that promotes disconnected images. QR codes also have built-in error correcting methods~\cite{tiwari_2016_1}. Combining this error correction with the proposed algorithms may yield efficient recovery with minimal available DFT coefficients and larger matrix sizes than $29 \times 29$.

\bibliographystyle{siamplain}
\bibliography{abbrev,Master,Book}

\begin{thebibliography}{10}

\bibitem{aardal_2000_1}
{\sc K.~Aardal, C.~A.~J. Hurkens, and A.~K. Lenstra}, {\em Solving a system of
  linear Diophantine equations with lower and upper bounds on the variables},
  \href{https://www.jstor.org/stable/3690477}{Math. Operations Res., 25 (2000), pp.~427--442}.

\bibitem{bailey_2012_1} {\sc J.~Bailey, M.~A. Iwen, and
  C.~V. Spencer}, {\em On the design of deterministic matrices for
  fast recovery of Fourier compressible functions},
  \href{https://doi.org/10.1137/110835864}{SIAM J. Matr. Analysis
    Appl., 33 (2012), pp.~263--289}.

\bibitem{beck_2009_1} {\sc A.~Beck and M.~Teboulle}, {\em A fast
  iterative shrinkage-thresholding algorithm for linear inverse
  problems}, \href{https://doi.org/10.1137/080716542}{SIAM
  J. Imag. Sci., 2 (2009), pp.~183--202}.

\bibitem{blumensath_2012_1} {\sc T.~Blumensath}, {\em Accelerated
  iterative hard thresholding},
  \href{https://doi.org/10.1016/j.sigpro.2011.09.017}{Sign.  Proc., 92
    (2012), pp.~752--756}.

\bibitem{blumensath_2013_1} {\sc T.~Blumensath}, {\em Compressed
  sensing with nonlinear oservations and related nonlinear
  optimization}, \href{https://doi.org/10.1109/TIT.2013.2245716}{IEEE
  Trans. Info. Theor., 59 (2013), pp.~3466--3474}.

\bibitem{blumensath_2009_1} {\sc T.~Blumensath and M.~E. Davies}, {\em
  Iterative hard thresholding for compressed sensing},
  \href{https://doi.org/10.1016/j.acha.2009.04.002}{Appl. Comp. Harm. Anal.,
    27 (2009), pp.~265--274}.

\bibitem{born_book_1999}
{\sc M.~Born and E.~Wolf}, {\em Principles of Optics}, Cambridge Univ. Press,
  1999.

\bibitem{borwein_2000_1} {\sc J.~M. Borwein and P.~Lison{\v{e}}k},
  {\em Applications of integer relation algorithms},
  \href{https://doi.org/10.1016/S0012-365X(99)00256-3}{Discr. Math.,
    217 (2000), pp.~65--82}.

\bibitem{brualdi_1980_1} {\sc R.~A. Brualdi}, {\em Matrices of zeros
  and ones with fixed row and column sum vectors},
  \href{https://doi.org/10.1016/0024-3795(80)90105-6}{Lin. Alg.
    Appl., 33 (1980), pp.~159--231}.

\bibitem{brualdi_1986_1} {\sc R.~A. Brualdi and E.~S. Solheid}, {\em
  On the spectral radius of complementary acyclic matrices of zeros
  and ones}, \href{https://doi.org/10.1137/0607030}{SIAM J. Alg. Disc.
  Meth., 7 (1986), pp.~265--272}.

\bibitem{candes_2006_1} {\sc E.~J. Cand{\`e}s, J.~Romberg, and
  T.~Tao}, {\em Robust uncertainty principles: {Exact} signal
  reconstruction from highly incomplete frequency information},
  \href{https://doi.org/10.1109/TIT.2005.862083}{IEEE
    Trans. Info. Theor., 52 (2006), pp.~489--509}.

\bibitem{conway_1976_1} {\sc J.~Conway and A.~Jones}, {\em
  Trigonometric diophantine equations (On vanishing sums of roots of
  unity)}, \href{http://eudml.org/doc/205475}{Acta Arithmetica, 30
  (1976), pp.~229--240}.

\bibitem{del_2002_1} {\sc A.~Del~Lungo, A.~Frosini, M.~Nivat, and
  L.~Vuillon}, {\em Discrete tomography: Reconstruction under
  periodicity constraints}, in
  \href{https://doi.org/10.1007/3-540-45465-9_5}{International
    Colloquium on Automata, Languages, and Programming, Springer,
    2002, pp.~38--56}.

\bibitem{dizenzo_1996_1} {\sc S.~Di~Zenzo, L.~Cinque, and
  S.~Levialdi}, {\em Run-based algorithms for binary image analysis
  and processing}, \href{https://doi.org/10.1109/34.476016}{IEEE
  Trans. Pattern. Anal. Mach.  Intel., 18 (1996), pp.~83--89}.

\bibitem{donoho_1989_1} {\sc D.~L. Donoho and P.~B. Stark}, {\em
  Uncertainty principles and signal recovery},
  \href{https://doi.org/10.1137/0149053}{SIAM J. Appl. Math., 49
    (1989), pp.~906--931}.

\bibitem{elad_2002_1} {\sc M.~Elad and A.~M. Bruckstein}, {\em A
  generalized uncertainty principle and sparse representation in pairs
  of bases}, \href{https://doi.org/10.1109/TIT.2002.801410}{IEEE
  Trans. Info. Theor., 48 (2002), pp.~2558--2567}.

\bibitem{ferguson_1999_1} {\sc H.~Ferguson, D.~Bailey, and S.~Arno},
  {\em Analysis of PSLQ, an integer relation finding algorithm},
  \href{http://dx.doi.org/10.1090/S0025-5718-99-00995-3}{Math. Comp.,
    68 (1999), pp.~351--369}.

\bibitem{fulkerson_1960_1} {\sc D.~R. Fulkerson}, {\em Zero-one
  matrices with zero trace},
  \href{https://projecteuclid.org/journals/pacific-journal-of-mathematics/volume-10/issue-3/Zero-one-matrices-with-zero-trace/pjm/1103038231.full?tab=ArticleLink}{Pacific
    J.  Math., 10 (1960), pp.~831--836}.

\bibitem{gao_2007_1} {\sc J.~Z. Gao, L.~Prakash, and R.~Jagatesan},
  {\em Understanding 2D-barcode technology and applications in
    M-commerce-design and implementation of a 2D barcode processing
    solution}, in \href{https://doi.org/10.1109/COMPSAC.2007.229}{31st
    Ann. Int. Computer Software and Applications Conference, vol.~2,
    IEEE, 2007, pp.~49--56}.

\bibitem{gardner_1997_1} {\sc R.~Gardner and P.~Gritzmann}, {\em
  Discrete tomography: Determination of finite sets by X-rays},
  \href{https://doi.org/10.1016/j.dam.2012.09.010}{Trans. Am. Math. Soc.,
    349 (1997), pp.~2271--2295}.

\bibitem{gardner_1999_1} {\sc R.~J. Gardner, P.~Gritzmann, and
  D.~Prangenberg}, {\em On the computational complexity of
  reconstructing lattice sets from their X-rays},
  \href{https://doi.org/10.1016/S0012-365X(98)00347-1}{Discr. Math.,
    202 (1999), pp.~45--71}.

\bibitem{gilbert_2014_1} {\sc A.~C. Gilbert, P.~Indyk, M.~Iwen, and
  L.~Schmidt}, {\em Recent developments in the sparse Fourier
  transform: A compressed Fourier transform for big data},
  \href{https://doi.org/10.1109/MSP.2014.2329131}{IEEE Signal
    Proc. Mag., 31 (2014), pp.~91--100}.

\bibitem{greenwood_1955_1} {\sc J.~A. Greenwood and D.~Durand}, {\em
  The distribution of length and components of the sum of $n$ random
  unit vectors},
  \href{https://www.jstor.org/stable/2236878}{Ann. Math. Stat., 26
    (1955), pp.~233--246}.

\bibitem{habegger_2018_1} {\sc P.~Habegger}, {\em The norm of Gaussian
  periods}, \href{https://doi.org/10.1093/qmath/hax028}{Quart. J.
  Math., 69 (2018), pp.~153--182}.

\bibitem{hajdu_2001_1} {\sc L.~Hajdu and R.~Tijdeman}, {\em Algebraic
  aspects of discrete tomography},
  \href{https://doi.org/10.1515/crll.2001.037https://doi.org/10.1515/crll.2001.037}{J. Reine
    Angew. Math., 534 (2001), pp.~119--128}.

\bibitem{hartshorne_book_2013} {\sc R.~Hartshorne}, {\em Geometry:
  Euclid and Beyond}, Undergraduate Texts in Mathematics, Springer,
  2013.

\bibitem{hastad_1989_1} {\sc J.~Hastad, B.~Just, J.~C. Lagarias, and
  C.-P. Schnorr}, {\em Polynomial time algorithms for finding integer
  relations among real numbers},
  \href{https://doi.org/10.1007/3-540-16078-7_69}{SIAM J.  Computing,
    18 (1989), pp.~859--881}.

\bibitem{herman_2003_1} {\sc G.~T. Herman and A.~Kuba}, {\em Discrete
  tomography in medical imaging},
  \href{https://doi.org/10.1109/JPROC.2003.817871}{Proc. IEEE, 91
    (2003), pp.~1612--1626}.

\bibitem{herman_book_2012}
{\sc G.~T. Herman and A.~Kuba}, {\em Discrete Tomography: Foundations,
  Algorithms, and Applications}, Springer, 2012.

\bibitem{hu_2014_1} {\sc W.~Hu, G.~Cheung, A.~Ortega, and O.~C. Au},
  {\em Multiresolution graph Fourier transform for compression of
    piecewise smooth images},
  \href{https://doi.org/10.1109/TIP.2014.2378055}{IEEE Trans.
    Imag. Proc., 24 (2014), pp.~419--433}.

\bibitem{jiji_2006_1} {\sc C.~V. Jiji, P.~Neethu, and S.~Chaudhuri},
  {\em Alias-free interpolation}, in
  \href{https://doi.org/10.1007/11744085_20}{Computer Vision - ECCV
    2006, A.~Leonardis, H.~Bischof, and A.~Pinz, eds., Springer, 2006,
    pp.~255--266}.

\bibitem{karp_1972_1} {\sc R.~M. Karp}, {\em Reducibility among
  combinatorial problems}, in
  \href{https://doi.org/10.1007/978-1-4684-2001-2_9}{Complexity of
    Computer Computations, Springer, 1972, pp.~85--103}.

\bibitem{lagendijk_1990_1} {\sc R.~L. Lagendijk and J.~Biemond}, {\em
  Iterative Identification and Restoration of Images}, Springer, 1990.

\bibitem{lam_2000_1} {\sc T.~Y. Lam and K.~H. Leung}, {\em On
  vanishing sums of roots of unity},
  \href{https://doi.org/10.1006/jabr.1999.8089}{J. Algebra, 224
    (2000), pp.~91--109}.

\bibitem{lenstra_1982_1} {\sc A.~K. Lenstra, H.~W. Lenstra, and
  L.~Lov{\'a}sz}, {\em Factoring polynomials with rational
  coefficients},
  \href{https://doi.org/10.1007/BF01457454}{Mathematische Annalen, 261
    (1982), pp.~515--534}.

\bibitem{lenstra_1978_1} {\sc H.~W. Lenstra}, {\em Vanishing sums of
  roots of unity}, in
  \href{https://www.math.leidenuniv.nl/~hwl/PUBLICATIONS/1979d/art.pdf}{Proc. Bicentennial
    Congress Wiskundig Genootschap, Part II, Vrije Univ. Amsterdam,
    1978, pp.~249--268}.

\bibitem{lenstra_1983_1} {\sc H.~W. Lenstra}, {\em Integer programming
  with a fixed number of variables},
  \href{https://www.jstor.org/stable/3689168}{Math. Oper. Res., 8
    (1983), pp.~538--548}.

\bibitem{levinson_2021_1} {\sc H.~W. Levinson and V.~A. Markel}, {\em
  Binary discrete Fourier transform and its inversion},
  \href{https://doi.org/10.1109/TSP.2021.3088215}{IEEE
    Trans. Sign. Proc., 69 (2021), pp.~3484--3499}.

\bibitem{liu_2014_1} {\sc X.~Liu, D.~Zhai, D.~Zhao, G.~Zhai, and
  W.~Gao}, {\em Progressive image denoising through hybrid graph
  laplacian regularization: A unified framework},
  \href{https://doi.org/10.1109/TIP.2014.2303638}{IEEE
    Trans. Imag. Proc., 23 (2014), pp.~1491--1503}.

\bibitem{marchand_2002_1} {\sc H.~Marchand, A.~Martin, R.~Weismantel,
  and L.~Wolsey}, {\em Cutting planes in integer and mixed integer
  programming},
  \href{https://doi.org/10.1016/S0166-218X(01)00348-1}{Discr. Appl. Math.,
    123 (2002), pp.~397--446}.

\bibitem{marchand_book_1999} {\sc S.~Marchand-Maillet and
  Y.~M. Sharaiha}, {\em Binary Digital Image Processing: A Discrete
  Approach}, Elsevier, 1999.

\bibitem{maznev_2017_1} {\sc A.~A. Maznev and O.~B. Wright}, {\em
  Upholding the diffraction limit in the focusing of light and sound},
  \href{https://doi.org/10.1016/j.wavemoti.2016.09.012}{Wave Motion,
    68 (2017), pp.~182--189}.

\bibitem{moitra_2015_1} {\sc A.~Moitra}, {\em Super-resolution,
  extremal functions and the condition number of Vandermonde
  matrices}, in
  \href{https://doi.org/10.1145/2746539.2746561}{Proceedings of the
    Forty-Seventh Annual ACM Symposium on Theory of Computing, ACM,
    2015, pp.~821--830}.

\bibitem{myerson_1986_1} {\sc G.~Myerson}, {\em How small can a sum of
  roots of unity be?}, \href{https://doi.org/10.2307/2323469}{The
  American Mathematical Monthly, 93 (1986), pp.~457--459}.

\bibitem{nasrollahi_2014_1} {\sc K.~Nasrollahi and T.~B. Moeslund},
  {\em Super-resolution: A comprehensive survey},
  \href{https://doi.org/10.1007/s00138-014-0623-4}{Machine Vision and
    Applications, 25 (2014), pp.~1423--1468}.

\bibitem{nguyen_2004_1} {\sc P.~Q. Nguyen and D.~Stehl{\'e}}, {\em
  Low-dimensional lattice basis reduction revisited}, in
  \href{https://doi.org/10.1145/1597036.1597050}{International
    Algorithmic Number Theory Symposium, Springer, 2004,
    pp.~338--357}.

\bibitem{pei_2022_1} {\sc S.-C. Pei and K.-W. Chang}, {\em Binary
  signal perfect recovery from partial DFT coefficients},
  \href{https://doi.org/10.1109/TSP.2022.3190615}{IEEE
    Trans. Sign. Proc., 70 (2022), pp.~3848--3861}.

\bibitem{plonka_2021_1} {\sc G.~Plonka and T.~von Wulffen}, {\em
  Deterministic sparse sublinear FFT with improved numerical
  stability},
  \href{https://doi.org/10.1007/s00025-020-01330-0}{Results in
    Mathematics, 76 (2021), p.~53}.

\bibitem{plonka_2018_1} {\sc G.~Plonka, K.~Wannenwetsch, A.~Cuyt, and
  W.-S. Lee}, {\em Deterministic sparse FFT for M-sparse vectors},
  \href{https://doi.org/10.1007/s11075-017-0370-5}{Numerical
    Algorithms, 78 (2018), pp.~133--159}.

\bibitem{rajan_2001_1} {\sc D.~Rajan and S.~Chaudhuri}, {\em
  Generalized interpolation and its application in super-resolution
  imaging}, \href{https://doi.org/10.1016/S0262-8856(01)00055-5}{Image
  and Vision Computing, 19 (2001), pp.~957--969}.

\bibitem{rauhut_2007_1} {\sc H.~Rauhut}, {\em Random sampling of
  sparse trigonometric polynomials},
  \href{https://doi.org/10.1016/j.acha.2006.05.002}{Appl. Comp. Harm. Anal.,
    22 (2007), pp.~16--42}.

\bibitem{ren_2002_1} {\sc M.~Ren, J.~Yang, and H.~Sun}, {\em Tracing
  boundary contours in a binary image},
  \href{https://doi.org/10.1016/S0262-8856(01)00091-9}{Image Vision
    Comp., 20 (2002), pp.~125--131}.

\bibitem{romano_2017_1} {\sc Y.~Romano, M.~Elad, and M.~Peyman}, {\em
  The little engine that could: Regularization by denoising (RED)},
  \href{https://doi.org/10.1137/16M1102884}{SIAM J. Imag. Sci., 10
    (2017), pp.~1804--1844}.

\bibitem{rudelson_2006_1} {\sc M.~Rudelson and R.~Vershynin}, {\em
  Sparse reconstruction by convex relaxation: Fourier and Gaussian
  measurements}, in
  \href{https://doi.org/10.1109/CISS.2006.286463}{2006 40th Annual
    Conference on Information Sciences and Systems, IEEE, 2006,
    pp.~207--212}.

\bibitem{rudin_1992_1} {\sc L.~I. Rudin, S.~Osher, and E.~Fatemi},
  {\em Nonlinear total variation based noise removal algorithms},
  \href{https://doi.org/10.1016/0167-2789(92)90242-F}{Physica D, 60
    (1992), pp.~259--268}.

\bibitem{ryser_1957_1} {\sc H.~J. Ryser}, {\em Combinatorial
  properties of matrices of zeros and ones},
  \href{https://doi.org/10.4153/CJM-1957-044-3}{Canadian J. Math., 9
    (1957), pp.~371--377}.

\bibitem{ryser_1960_1} {\sc H.~J. Ryser}, {\em Matrices of zeros and
  ones},
  \href{https://doi.org/10.1090/S0002-9904-1960-10494-6}{Bull. Am. Math. Sci.,
    66 (1960), pp.~442--464}.

\bibitem{schrijver_1980_1} {\sc A.~Schrijver}, {\em On cutting
  planes},
  \href{https://doi.org/10.1016/S0167-5060(08)70085-2}{Ann. Discr. Math.,
    9 (1980), pp.~291--296}.

\bibitem{shoup_book_2009}
{\sc V.~Shoup}, {\em A Computational Introduction to Number Theory and
  Algebra}, Cambridge Univ. Press, 2009.

\bibitem{snijders_1991_1} {\sc T.~A.~B. Snijders}, {\em Enumeration
  and simulation methods for 0--1 matrices with given marginals},
  \href{https://doi.org/10.1007/BF02294482}{Psychometrika, 56 (1991),
    pp.~397--417}.

\bibitem{tao_2005_1}
{\sc T.~Tao}, {\em An uncertainty principle for cyclic groups of prime order},
  \href{https://dx.doi.org/10.4310/MRL.2005.v12.n1.a11}{Math. Res. Lett., 12 (2005), pp.~121--127}.

\bibitem{tiwari_2016_1} {\sc S.~Tiwari}, {\em An introduction to QR
  code technology}, in
  \href{https://doi.org/10.1109/ICIT.2016.021}{International
    Conference on Information Technology (ICIT), IEEE, 2016,
    pp.~39--44}.

\bibitem{tropp_2008_1} {\sc J.~A. Tropp}, {\em On the linear
  independence of spikes and sines},
  \href{https://doi.org/10.1007/s00041-008-9042-0}{J. Fourier
    Anal. Appl., 14 (2008), p.~838}.

\bibitem{tropp_2007_1} {\sc J.~A. Tropp and A.~C. Gilbert}, {\em
  Signal recovery from random measurements via orthogonal matching
  pursuit}, \href{https://doi.org/10.1109/TIT.2007.909108}{IEEE
  Trans. Info. Theor., 53 (2007), pp.~4655--4666}.

\bibitem{wang_1998_1} {\sc B.-Y. Wang and F.~Zhang}, {\em On the
  precise number of (0,1)-matrices in A(R,S)},
  \href{https://doi.org/10.1016/S0012-365X(97)00197-0}{Discr. Math.,
    187 (1998), pp.~211--220}.

\bibitem{yagle_1998_1} {\sc A.~E. Yagle}, {\em An algebraic solution
  to the 3-D discrete tomography problem}, in
  \href{https://doi.org/10.1109/ICIP.1998.723627}{Proc. 1998
    Int. Conf. Image Process., vol.~2, IEEE, 1998, pp.~714--717}.

\bibitem{yagle_2001_1} {\sc A.~E. Yagle}, {\em A convergent composite
  mapping Fourier domain iterative algorithm for 3-D discrete
  tomography},
  \href{https://doi.org/10.1016/S0024-3795(01)00458-X}{Lin. Alg. Appl.,
    339 (2001)}, pp.~91--109.

\end{thebibliography}

\end{document}